\numberwithin{equation}{section}
\numberwithin{figure}{section}
    \newcommand{\sbt}{\,\begin{picture}(-1,1)(-1,-3)\circle*{2}\end{picture}\ }
\newcommand{\sarc}{\mathrel{\ooalign{$\nabla$\cr
  \hidewidth\raise.3ex\hbox{$\sbt\mkern5mu$}\cr}}}
\newcommand{\nn}{\,\begin{picture}(-1,1)(-1,-3)\scalebox{.5}{n}\end{picture}\ }
\newcommand{\narc}{\mathrel{\ooalign{$\nabla$\cr
  \hidewidth\raise.05ex\hbox{$\nn\mkern7mu$}\cr}}}
\theoremstyle{plain}
\newtheorem*{conjectuur*}{Conjecture}
\newtheorem{theorem}[subsection]{Theorem}
\newtheorem{lemma}[subsection]{Lemma}
\newtheorem{proposition}[subsection]{Proposition}
\theoremstyle{definition}
\newtheorem{definition}[subsection]{Definition}
\newtheorem{example}[subsection]{Example}
\theoremstyle{remark}
\newtheorem{remark}[subsection]{Remark}
\newcommand{\emptyprop}{q}
\newcommand{\ulim}[1]{\mbox{ulim}{#1}}
\renewcommand{\dim}[1]{\mbox{dim}{#1}}
\newcommand{\sX}[1]{\mathcal{X}_{#1}}
\newcommand{\sY}[1]{\mathcal{Y}_{#1}}
\newcommand{\sZ}[1]{\mathcal{Z}_{#1}}
\newcommand{\bA}{\mathbb{A}}
\newcommand{\bX}{\mathbb{X}}
\newcommand{\sset}{\categ{sSet}}
\newcommand{\sch}[1]{\categ{Sch}_{#1}}
\newcommand{\set}{\categ{Set}}
\newcommand{\ssieves}[1]{\categ{sSieve}_{#1}}
\newcommand{\nsieves}[2]{\categ{s}^{#2}\categ{Sieve}_{#1}}
\newcommand{\mot}{\text{\fontsize{16}{26}\selectfont\cjRL{M}}}
\newcommand{\mmot}{\categ{Mes}\text{\fontsize{16}{26}\selectfont\cjRL{M}}}
\newcommand{\sF}[1]{\mathcal{F}({#1})}
\newcommand{\spec}[1]{\operatorname{Spec}(#1)}
\newcommand{\sS}[3]{\mathcal{S}_{#1}^{#2}{(#3)}}
\newcommand{\sP}[3]{\mathcal{P}_{#1}^{#2}{(#3)}}
\newcommand{\sK}[3]{\mathcal{K}_{#1}^{#2}{(#3)}}
\newcommand{\sC}[3]{\mathcal{C}_{#1}^{#2}{(#3)}}
\newcommand{\sT}[3]{\mathcal{T}_{#1}^{#2}{(#3)}}
\newcommand{\bsT}[3]{\bar{\mathcal{T}}_{#1}^{#2}{(#3)}}
\newcommand{\bsC}[3]{\bar{\mathcal{C}}_{#1}^{#2}{(#3)}}
\newcommand{ \fn}{\mathfrak{n}}
\newcommand{\complet}[1]{\widehat {#1}}
\newcommand{\into}{\hookrightarrow}
\newcommand{\maxim}{\mathfrak m}
\newcommand{\nat}{\mathbb N}
\newcommand{\zet}{\mathbb Z}
\newcommand{\commdiagram}[9][]{%
\begin{equation}
{\newcommand{\tmpprop}{#1q} 
\if\tmpprop\emptyprop \relax\else \label{#1}\fi}
\begin{aligned}%
\mbox{
\begin{picture}(130,90)%
\put(120,70){\vector( 0,-1){50}}%
\put(10,80){\vector( 1, 0){100}}%
\put(0,70){\vector( 0,-1){50}}%
\put(10,10){\vector( 1, 0){100}}%
\put(115,80){\makebox(0,0)[l]{$#4$}}%
\put(5,80){\makebox(0,0)[r]{$#2$}}%
\put(115,10){\makebox(0,0)[l]{$#9$}}%
\put(5,10){\makebox(0,0)[r]{$#7$}}%
\put(-3,50){\makebox(0,0)[r]{$#5$}}
\put(123,50){\makebox(0,0)[l]{$#6$}}
\put(60,3){\makebox(0,0)[c]{$#8$}}
\put(60,88){\makebox(0,0)[c]{$#3$}}
\end{picture}}
\end{aligned}
\end{equation}}
\newcommand{\commtrianglefront}[7][]{%
\begin{equation}
{\newcommand{\tmpprop}{#1q} 
\if\tmpprop\emptyprop \relax\else \label{#1}\fi}
\begin{aligned}%
\mbox{
\begin{picture}(120,80)%
\put(55,68){\vector(-1,-2){30}}
\put(65,68){\vector(1,-2){30}}
\put(30,5){\vector(1,0){60}}
\put(60,75){\makebox(0,0)[c]{$#2$}}
\put(25,5){\makebox(0,0)[r]{$#4$}}
\put(95,5){\makebox(0,0)[l]{$#6$}}
\put(60,0){\makebox(0,0)[c]{$#5$}}
\put(37,43){\makebox(0,0)[r]{$#3$}}
\put(83,43){\makebox(0,0)[l]{$#7$}}
\end{picture}}
\end{aligned}
\end{equation}}
\newcommand{\commtriangleback}[7][]{%
\begin{equation}
{\newcommand{\tmpprop}{#1q}
\if\tmpprop\emptyprop \relax\else \label{#1}\fi}
\begin{aligned}%
\mbox{
\begin{picture}(120,80)%
\put(55,70){\vector(-1,-2){30}}
\put(65,70){\vector(1,-2){30}}
\put(30,5){\vector(1,0){60}}
\put(60,75){\makebox(0,0)[c]{$#2$}}
\put(25,5){\makebox(0,0)[r]{$#6$}}
\put(95,5){\makebox(0,0)[l]{$#4$}}
\put(60,0){\makebox(0,0)[c]{$#5$}}
\put(37,43){\makebox(0,0)[r]{$#7$}}
\put(83,43){\makebox(0,0)[l]{$#3$}}
\end{picture}}
\end{aligned}
\end{equation}}
\newcommand{\fat}{\mathfrak z}
\newcommand{\categ}[1]{\mathbbmss{#1}}
\newcommand{\limfat}[1]{\complet{\categ{Fat}}_{#1}}
\newcommand{\sieves}[1]{\categ{Sieve}_{#1}}
\newcommand{\fld}{\kappa}
\newcommand{\class}[1]{{[ #1]}}
\newcommand{\grot}[1]{{\mathbf {Gr}(#1)}}
\newcommand{\fatpoints}[1]{\categ {Fat}_{#1}}
\newcommand{\lef}{{\mathbb L}}
\begin{document}

\title{Integrable functions within the theory of schemic motivic integration}

\author{Andrew R. Stout}

\address{Andrew R. Stout\\
 Graduate Center, City University of New York\\
 365 Fifth Avenue\\
10016\\
U.S.A. \& Universit� Pierre-et-Marie-Curie \\
 4 place Jussieu \\
75005\\
Paris, France}

\email{astout@gc.cuny.edu}

\maketitle

\begin{abstract}
We develop integrable functions within the theory of relative simplicial
motivic measures. 
\end{abstract}
\setcounter{tocdepth}{1} \tableofcontents{}

\section{Introduction}

In \cite{me2}, the author developed the theory of measurable limit simplicial motivic sites (measurable motivic sites for short) in order to shed light on computational issues encountered in \cite{me1}. The simpliical approach allowed for the prospect of defining motivic integration over derived stacks, yet at another level it allowed the author to investigate classes of functions not yet appearing in the literature: permissible functions, total functions, and cach\'e functions along with an integrability condition. 
This work was heavily inspired by the definable approach to motivic integration as laid bare in the seminal work \cite{CL} of R. Cluckers and F. Loeser. 

Schemic motivic integration was created by H. Schoutens in  \cite{schmot1} and \cite{schmot2}. One should consult these works for terminology and background on this field. I  gave a quick introduction to the subject in \cite{me2} within the framework of simplicial sieves. This current work heavily relies upon the results of these three papers.

The motivation behind this work is the hope of producing a general change of variables formula in the context of schemic motivic integration. This work was partially supported by the chateaubriand fellowship, Prof. F. Loeser, and DSC research grant.

\section{Permissible schemic functions} \label{2}

Let $X \in \sch \fld$. By a \textit{function} $f : X \to \nat$, we will mean a collection of set maps 
\begin{equation*}
f(\maxim)  : X^{\circ}(\maxim) \to \nat \ \mbox{for each} \ \maxim \in \fatpoints \fld
\end{equation*}
such that for any embedding $i : \maxim' \into \maxim$, then $f(\maxim')(i^{*}x)=f(\maxim)(x)$. Note that this asures us that 
the image of $f(\maxim')$ is naturally included into the image of $f(\maxim)$.
 Given one of these so-called functions, we can associate a contravariant functor $\Gamma_f : \fatpoints\fld \to \set $ defined 
by 
\begin{equation}
\Gamma_f(\maxim):= \Gamma_{f(\maxim)}:= \{(x,n) \in X^{\circ}(\maxim)\times \nat \mid  f(\maxim)(x) = n\} \ .
\end{equation}
Note tha a morphism $j: \maxim' \to \maxim$ induces a map of sets $j^*:X^{\circ}(\maxim)\to X^{\circ}(\maxim')$ by sending a 
point $x: \maxim \to X$ to the point $j^*x:\maxim' \to X$ defined by the composition $x \circ j$. In the same way, 
this induces a map of sets $j^* : \Gamma_f(\maxim) \to \Gamma_f(\maxim')$ by sending $(x,f(\maxim)(x))$ to 
$(j^*x,f(\maxim')(j^*x))$.
For each $\maxim \in \fatpoints\fld$, this also gives rise to an assignment from $\Delta^{\circ}$ to $\set$ by sending a simplicial complex $[n]$ to the set 
\begin{equation}
\Gamma_f(\maxim)([n]):=f(\maxim)^{-1}(n) =\Gamma_f(\maxim) \cap( X^{\circ}(\maxim)\times\{n\}) \ .
\end{equation}
From this, we can see that $\Gamma_f(\maxim)([n])$ can be identified with a subset of $X^{\circ}(\maxim)$. The inclusion of $\Gamma_f(\maxim)([n])$ into $X^{\circ}(\maxim) $ will be natural in fat points. Thus,  for each $[n] \in \Delta^{\circ}$, the presieve $\Gamma_f(-)([n])$ is actually a sieve with $X$ as its ambient space.

\begin{definition} 
Let $X \in \sch \fld$ and let $f : X \to \nat$ be a function. We will say that $f$ is a {\it  permissible function} if the assignment 
from
$\Delta^{\circ}$ to $\sieves\fld$ defined by sending $[n]$ to $\Gamma_f(-)([n])$ is in fact a functor. In this case,  
$\Gamma_f $
will denote the corresponding element in $\ssieves\fld$.
\end{definition}

\begin{example}
For any element $X$ of $\sch\fld$, we have the so-called {\it positive constant functions} which are precisely the functions such 
that for all $\maxim \in \fatpoints\fld$, $f(\maxim)$ is equal to some fixed $n$ on $X^{\circ}(\maxim)$. In this case, we have 
the following simple formula 
\begin{equation}
\tau_n(\Gamma_f) = X \ \mbox{and where} \ \tau_m(\Gamma_f) = \emptyset \ \mbox{when} \ m \neq n \ .
\end{equation}
Thus, it is immediate that every positive constant function is a  permissible function. We will denote the collection of positive constant functions on 
$X$
by $\sK{}{+}{X}$.
\end{example}

Given a scheme $X \in \sch\fld$, we form the set of all permissible functions and denote it by $\sS{}{+}{X}$. By definition, $\sS{}{+}{X}$ can be identified with a subset of $\ssieves\fld$ via $f \mapsto \Gamma_f$. As such, the operations of $\times, \ \sqcup, \ \cup$ and $\cap$ restrict to operations on $\sS{}{+}{X}$. Here, these operations are taking place in $\sset$. For example, 
\begin{equation}
(\Gamma_f \cup \Gamma_g)(\maxim)([n]) = \Gamma_f(\maxim)([n])\cup \Gamma_g(\maxim)([n]) \ .
\end{equation}

We can introduce even more operations on the set $\sS{}{+}{X}$. Given two elements $f$ and $g$ in $\sS{}{+}{X}$, we form the function
$f+g$ defined by 
\begin{equation} \label{addsch}
(f+g)(\maxim)(x):=(f(\maxim)+g(\maxim))(x) = f(\maxim)(x)+g(\maxim)(x)
\end{equation}
for each $x\in X^{\circ}(\maxim)$ and for each $\maxim\in\fatpoints\fld$. Likewise, we form the function
$f\cdot g$ defined by 
\begin{equation}\label{multsch}
(f\cdot g)(\maxim)(x):=(f(\maxim)\cdot g(\maxim))(x) = f(\maxim)(x)\cdot g(\maxim)(x)
\end{equation}
for each $x\in X^{\circ}(\maxim)$ and for each $\maxim\in\fatpoints\fld$.

\begin{proposition}\label{persemi}
Let $X\in \sch\fld$. The set $\sS{}{+}{X}$ is a semiring with respect to the operations $+$ and $-$ defined in formulas 2.4 and 2.5, respectively.
\end{proposition}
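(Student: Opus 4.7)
The statement contains a minor typo: formula (2.5) defines multiplication, so the two operations in question are $+$ and $\cdot$. My plan divides into two parts: (i) closure of $\sS{}{+}{X}$ under $+$ and $\cdot$, and (ii) verification of the semiring axioms.

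For (i), the pointwise definitions (2.4) and (2.5) give, for each $\maxim \in \fatpoints\fld$ and each $n \in \nat$, the decompositions
\begin{align*}
\Gamma_{f+g}(\maxim)([n]) &= \bigcup_{k+\ell = n} \bigl( \Gamma_f(\maxim)([k]) \cap \Gamma_g(\maxim)([\ell]) \bigr), \\
\Gamma_{f\cdot g}(\maxim)([n]) &= \bigcup_{\substack{k,\ell \geq 1 \\ k\ell = n}} \bigl( \Gamma_f(\maxim)([k]) \cap \Gamma_g(\maxim)([\ell]) \bigr) \quad (n \geq 1),
\end{align*}
together with $\Gamma_{f \cdot g}(\maxim)([0]) = \Gamma_f(\maxim)([0]) \cup \Gamma_g(\maxim)([0])$, which follows because $\nat$ has no zero divisors. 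All of these unions are finite. Since $f$ and $g$ are permissible, each right-hand term is a sieve in $X$; since finite unions and intersections of sieves inside a fixed ambient scheme are again sieves, each level of $\Gamma_{f+g}$ and of $\Gamma_{f\cdot g}$ is a sieve in $X$. Functoriality in $[n] \in \Delta^\circ$ is then inherited from the simplicial structures on $\Gamma_f$ and $\Gamma_g$, using that $\cap$ and $\cup$ are functorial operations on $\ssieves\fld$.

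For (ii), the additive identity is the positive constant function with value $0$, and the multiplicative identity is the positive constant function with value $1$; both lie in $\sK{}{+}{X} \subseteq \sS{}{+}{X}$ by the preceding example. Associativity and commutativity of $+$ and $\cdot$, together with distributivity of $\cdot$ over $+$, all reduce to pointwise evaluation: by (2.4) and (2.5), each axiom for $\sS{}{+}{X}$ evaluated at $x \in X^\circ(\maxim)$ is just the corresponding axiom in the semiring $\nat$ applied to the values $f(\maxim)(x)$ and $g(\maxim)(x)$.

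The main subtlety is the simplicial-functoriality part of (i): one must verify that the face and degeneracy maps on $\Gamma_f$ and $\Gamma_g$, combined via the union-of-intersections decompositions above, do assemble into the required simplicial structure on $\Gamma_{f+g}$ and $\Gamma_{f \cdot g}$, rather than merely yielding a sieve at each level. Once this compatibility is in place, the remaining checks are entirely routine.
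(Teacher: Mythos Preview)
Your proof is correct and follows essentially the same strategy as the paper: reduce closure under $+$ (and $\cdot$) to a finite decomposition of the level sieves $\Gamma_{f+g}(-)([n])$ in terms of the level sieves of $f$ and $g$, then use that finite unions and intersections of sieves are sieves. You are in fact more careful than the paper on two points. First, your decomposition uses an intersection inside the union,
\[
\Gamma_{f+g}(\maxim)([n]) = \bigcup_{k+\ell=n}\bigl(\Gamma_f(\maxim)([k])\cap\Gamma_g(\maxim)([\ell])\bigr),
\]
which is the correct formula; the paper's printed version has $\cup$ in place of $\cap$, which would give the wrong set. Second, you treat the multiplicative case explicitly (including the $n=0$ fiber via absence of zero divisors in $\nat$) and spell out the semiring axioms, whereas the paper simply asserts that the multiplicative case ``follows along the same lines'' and takes the axioms as evident. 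The only point you flag as a subtlety---that the simplicial structure maps on $\Gamma_{f+g}$ are inherited from those on $\Gamma_f$ and $\Gamma_g$ via functoriality of $\cup$ and $\cap$---is exactly what the paper invokes as well, so there is no gap.
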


\begin{proof}
The only thing that remains unclear is if $+$ and $-$ are closed operations -- meaning, if $f , g \in \sS{}{+}{X}$, then $f+g, f\cdot g \in \sS{}{+}{X}$. Therefore, we show that $f+g$ is a permissible function (as the proof for $f\cdot g$ will follow along the same lines). To wit,
we claim that the assignment  $[n]\mapsto\Gamma_{f+g}(-)([n])$ is a functor. Note that, for each $\maxim \in \fatpoints\fld$,
we have the following decomposition 
\begin{equation}
\Gamma_{f+g}(\maxim)([n]) = \bigcup_{t=0}^{n}(\Gamma_{f}(\maxim)([t]) \cup \Gamma_{g}(\maxim)([n-t])) \ .
\end{equation}
Since $f$ and $g$ are permissable, we have expressed the assignment as a union of simplicial sieves which makes it indeed a functor.
\end{proof}

Because of Proposition \ref{persemi}, we may associate to each element $X$ of $\sch\fld$ its {\it ring of permissible functions}  denoted by $\sS{}{}{X}$, which is formed by taking the associated grothendieck ring of $\sS{}{+}{X}$ with respect to the $+$ operation.

\begin{proposition}\label{persheaf}
Let $X \in \sch\fld$. The assignment which sends any open set $U$ of $X$ to the ring $\sS{}{}{U}$ (resp., the semiring $\sS{}{+}{U}$) is a sheaf of rings (resp., a sheaf of semirings) on the scheme $X$.
\end{proposition}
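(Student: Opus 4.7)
The plan is to treat the semiring case first and deduce the ring case by applying the Grothendieck construction functorially. For any open inclusion $V \subseteq U$ of open subschemes of $X$ and any $\maxim \in \fatpoints\fld$, there is a canonical inclusion $V^{\circ}(\maxim) \hookrightarrow U^{\circ}(\maxim)$ natural in $\maxim$. This gives a restriction map $\rho_{UV} \colon \sS{}{+}{U} \to \sS{}{+}{V}$ sending $f$ to $(\rho_{UV} f)(\maxim) := f(\maxim)|_{V^{\circ}(\maxim)}$. Permissibility is preserved because $\Gamma_{\rho_{UV}(f)}(\maxim)([n]) = \Gamma_f(\maxim)([n]) \cap V^{\circ}(\maxim)$ inherits its functoriality from that of $\Gamma_f$, and compatibility with the operations \eqref{addsch} and \eqref{multsch} is immediate from their pointwise definition.

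Now fix a Zariski open cover $\{U_i\}_{i \in I}$ of an open set $U \subseteq X$. The crucial observation is that for every $\maxim \in \fatpoints\fld$ the unique closed point of $\maxim$ lands in some $U_i$, so any $x \colon \maxim \to U$ factors through one of the $U_i$. Therefore $U^{\circ}(\maxim) = \bigcup_i U_i^{\circ}(\maxim)$, which immediately gives separation: if $f, g \in \sS{}{+}{U}$ satisfy $\rho_{U U_i}(f) = \rho_{U U_i}(g)$ for all $i$, then $f(\maxim)(x) = g(\maxim)(x)$ for every $\maxim$ and every $x$, hence $f = g$.

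For gluing, I take $f_i \in \sS{}{+}{U_i}$ with $\rho_{U_i, U_i \cap U_j}(f_i) = \rho_{U_j, U_i \cap U_j}(f_j)$ for all $i, j$, and define $f(\maxim)(x) := f_i(\maxim)(x)$ whenever $x \in U_i^{\circ}(\maxim)$; well-definedness follows from the cocycle condition, and the naturality condition under embeddings $\maxim' \hookrightarrow \maxim$ is inherited pointwise from the $f_i$. To see that $f$ is permissible, I would verify
\begin{equation}
\Gamma_f(\maxim)([n]) = \bigcup_{i \in I} \Gamma_{f_i}(\maxim)([n])
\end{equation}
as subsets of $U^{\circ}(\maxim)$ and deduce that $[n] \mapsto \Gamma_f(-)([n])$ is a functor from $\Delta^{\circ}$ to $\sieves\fld$ with ambient space $U$, leveraging that each $\Gamma_{f_i}$ is already a simplicial sieve on $U_i$ and that the overlap compatibility makes the union coherent.

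The main obstacle is this last step: justifying that the pointwise union of the simplicial sieves $\Gamma_{f_i}$ along a compatible cover really produces a simplicial sieve on the total space $U$. This reduces to the descent property for the presheaf $X \mapsto \{\text{sieves with ambient } X\}$ along Zariski covers, which itself rests on the sheaf property of the presheaf $X \mapsto X^{\circ}(\maxim)$ for each fixed $\maxim$. Once the semiring sheaf $\sS{}{+}{-}$ is established, the ring case follows formally: applying the Grothendieck ring construction (with respect to $+$) to the sections yields a presheaf of rings whose sections are exactly the $\sS{}{}{U}$, and since the underlying semiring presheaf already satisfies the sheaf axioms and the Grothendieck ring functor preserves finite limits, no further sheafification is needed to conclude that $\sS{}{}{-}$ is a sheaf of rings on $X$.
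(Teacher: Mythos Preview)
Your treatment of the semiring case matches the paper's: both define restriction via intersection with the open subsieve and glue pointwise, and your extra remarks about permissibility of the glued function are a welcome addition to what the paper leaves implicit.

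The gap is in the ring case. You assert that ``the Grothendieck ring functor preserves finite limits,'' and conclude that the sheaf axiom (an equalizer condition) passes automatically from $\sS{}{+}{-}$ to $\sS{}{}{-}$. This is backwards: group completion is \emph{left} adjoint to the forgetful functor from abelian groups to commutative monoids, so it preserves colimits, not limits. There is no general reason for an equalizer of monoids to remain an equalizer after group completion (e.g.\ the inclusion $\nat \hookrightarrow \nat\cup\{\infty\}$ becomes $\zet \to 0$), and the sheaf diagram may moreover involve \emph{infinite} products when the cover is infinite, which is a separate obstruction your ``finite limits'' claim does not address.

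The paper handles this point by hand rather than by appeal to a non-existent categorical principle: given compatible $f_i \in \sS{}{}{U_i}$, it lifts them along the surjections $\mbox{gr}(U_i)$ to $g_i \in \sS{}{+}{U_i}$, unwinds the equivalence relation defining the Grothendieck group to obtain witness functions $m_{ij}, n_{ij} \in \sS{}{+}{U_i\cap U_j}$ with $g_i|_{U_j}+m_{ij}=g_j|_{U_i}+n_{ij}$, glues these using the already-established sheaf property of $\sS{}{+}{-}$, and then pushes the result back down via $\mbox{gr}(U)$. You should replace your last paragraph with an argument of this shape; the cancellativity of $\sS{}{+}{U}$ (pointwise functions into $\nat$) is what makes the lifting and descent controllable.
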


\begin{proof}
For two open sets $U \subset V$ of a scheme $X$, we have the restriction  morphism $\sS{}{+}{V} \to \sS{}{+}{U}$ given by sending $f$ to the unique element of $\sS{}{+}{U}$ whose graph is of the form $\Gamma_f \cap (U)_{\bullet}$. Note that this is also enough to define the restriction homomorphim from  $\sS{}{}{V} $ to  $\sS{}{}{U}$.  The fact that $\sS{}{+}{-}$ and $\sS{}{}{-}$  are presheaves on $X$ is immediate. 

Let $U_i$ be an arbitrary cover of some open set $U$ of $X$. We wish to show that the first arrow in the  
diagram of semiring homomorphisms
\begin{equation}
\sS{}{+}{U} \to \prod_i \sS{}{+}{U_i} \genfrac{}{}{0pt}{0}{\to}{\to} \prod_{i, j}\sS{}{+}{U_i\cap U_j} 
\end{equation}
is an equalizer. This is straightforward. 
Letting $f_i \in  \sS{}{+}{U_i}$ be such that $f_i|_{U_j} = f_j|_{U_i}$ in $\sS{}{+}{U_i\cap U_j}$, we can just form the function $f$ on $U$ defined by sending an $x \in U^{\circ}(\maxim)$ to $f_i(\maxim)(x)$ for any $i$ such that $x\in (U_i)^{\circ}(\maxim)$ for all $\maxim \in \fatpoints\fld$. Uniqueness of this gluing holds because the corresponding graphs will be the same.

 We will let $\mbox{gr}(U) : \sS{}{+}{U} \to \sS{}{}{U}$ denote the grothendieck semiring homomorphism with respect to the operation $+$. Let $f_i \in  \sS{}{}{U_i}$ be such that $f_i|_{U_j} = f_j|_{U_i}$. As  $\mbox{gr}(U_i)$ is surjective for each $i$, these functions lift to function $g_i \in  \sP{}{+}{U_i}$ such that  
\begin{equation}
\mbox{gr}(U_i\cap U_j)(g_i|_{U_j})=\mbox{gr}(U_i)(g_i)|_{U_j} 
= \mbox{gr}(U_j)(g_j)|_{U_i} =\mbox{gr}(U_i\cap U_j)(g_j|_{U_i}) \ .
\end{equation}
Thus, there are a functions $m_{ij}, n_{ij} \in \sS{}{+}{U_i\cap U_j}$ for each $i$ and $j$ such that $g_i|_{U_j} + m_{ij} = g_j|_{U_i}+ n_{ij}$. Thus, by gluing as before we arrive at unique functions $g, \  m, \  n \in \sS{}{+}{U}$ such that 
$g + m = g + n$. From this we find that $\mbox{gr}(U)(m) =\mbox{gr}(U)(n)$ which means that the function $f = \mbox{gr}(U)(g)$ is such that $f|_{U_i} = f_i$. Uniqueness of this gluing follows from a similar argument.
\end{proof}

\begin{remark}
The assignment which sends an open set $U$ of $X$ to the grothendieck semiring homomorphism $\mbox{gr}(U) : \sS{}{+}{U} \to \sS{}{}{U}$ is a morphism of semiring sheaves. Therefore, the stalk $\mathcal{S}_x$ of  $\sP{}{}{-}$ at $x \in X$ is the grothendieck ring with respect to $+$ of the stalk $\mathcal{S}_x^+$ of  $\sP{}{+}{-}$ at $x \in X$.
\end{remark}

One way to describe the functions in $\sS{}{}{X}$ is to consider pairs $(f,g)$ where $f$ and $g$ are elements of  $\sS{}{+}{X}$  and where we mod out by the following equivalent relation:
$(f_1,g_1)\sim(f_2,g_2)$ if there exist some function $h \in\sS{}{+}{X}$ such that $f_1 +g_2 +h = f_2 + g_1 + h$. Here one should think of $f$ as the positive part and $g$ as the negative part.
 Further, this just means that elements of  $\sS{}{}{X}$ are \textit{functions} $F$ from $X$ to $\nat^2$ modulo $\sim$. By this, we mean that for each $\maxim \in \fatpoints\fld$ we have a function from $X^{\circ}(\maxim)$ to $\nat^2$ which is functorial with respect to inclusions of fat points as before -- with the added property that the assignment from $\Delta^{\circ}\times \Delta^{\circ}$ to $\sieves\fld$ given by
\begin{equation}
([n],[m])\mapsto\Gamma_F(([n],[m])) 
\end{equation}
 where $\Gamma_F(([n],[m]))$ is the $\fld$-sieve defined by sending each $\maxim \in \fatpoints\fld$ to the set
\begin{equation}
\Gamma_{F(\maxim)}\cap (X^{\circ}(\maxim)\times\{(n,m)\}) \ 
\end{equation}
is a functor -- i.e., $\Gamma_F(-,-)$ is a bisimplicial $\fld$-sieve. Since we therefore have use for such objects, we will denote by $\nsieves{\fld}{n}$  the category of all covariant functors $\prod_{j=1}^n \Delta^{\circ} \to  \sieves\fld$, and we call such objects $n$-{\it simplicial} $\fld$-{\it sieves} (or, bisimplicial $\fld$-sieves and trisimplicial $\fld$-sieves for $n=2$ and $n=3$, respectively.) The category $\nsieves{\fld}{n}$ is equivalent to the category of covariant functors $\Delta^{\circ}\to\nsieves{\fld}{n-1}$ for any $n > 0$.

\begin{example}
It is easy to check that $\sK{}{+}{X}$ is a sub-semiring of $\sS{}{+}{X}$ for $X\in\sch\fld$. We define {\it the ring of constant functions on} $X$, denoted by $\sK{}{}{X}$, to be the image of $\sK{}{+}{X}$ in $\sS{}{}{X}$ under the 
grothendieck semiring homomorphism $\mbox{gr}(X)$. We can identify elements  $\sK{}{}{X}$ with a collection of functions
$f(\maxim) : X^{\circ}(\maxim) \to \zet$ defined by $f(\maxim)(x)=n$ for all $x\in X^{\circ}(\maxim)$, for all $\maxim \in \fatpoints\fld$, and for some fixed $n \in\zet$ . This is because the graphs of the positive and the negative part are instantly sieves.
\end{example}

\begin{remark}
In fact, the assignment which sends an open set $U$ of $X$ to $\sK{}{+}{U}$ (resp., $\sK{}{}{U}$) is a sheaf of semirings (resp. a sheaf of rings) on $X$. Moreover, $\sS{}{+}{-}$ (resp., $\sS{}{}{-}$) is a sheaf of $\sK{}{+}{-}$-semialgebras (a sheaf of $\sK{}{}{-}$-algebras).
\end{remark}

\section{Permissible simplicial functions} \label{3}

Let $\sX\ $ of $\nsieves{\fld}{n}$ . By a {\it point} of $\sX{}(\maxim)$, we mean a tuple $(x,\bar m)$ such that $x\in  \sX{[\bar m]}(\maxim)$ and $\bar m \in \nat^n$. By a {\it function} $f:\sX{}\to\nat$, we mean a collection $f(\maxim)$ of set-theorectic functions from the set of points of $\sX{\maxim}$ to $\nat$ with the property that $f(\maxim')(i^{*}x,\bar m)=f(\maxim)(x,\bar m)$ for all $\bar m \in \nat^n$ and all $x \in \sX{[\bar m]}(\maxim)$
whenver we have an embedding $i : \maxim' \to \maxim$ in $\fatpoints\fld$. Often we will suppress the brackets around $\bar m$.

\begin{example}
 Consider the constant functor $(-)_{\bullet} : \sieves\fld \to\nsieves{\fld}{n}$ which sends a sieve $\sX{}$ to the $n$-simplicial sieve $(\sX{})_{\bullet}$ which is defined as the functor which sends $\sigma\in\prod_{j=1}^{n}\Delta^{\circ}$ to $(\sX{})_{\sigma}:=\sX{}$. Now let $\sX{} \in \sieves\fld$ and let $f:\sX{} \to \nat$ be a function. The constant functor also extends to functions by defining $g:=(f)_{\bullet} : \sX{} \to \nat$ by $g(\maxim)(x,\bar m) := f(\maxim)(x)$ for all $\bar m \in \nat^n$ and  all $\maxim \in \fatpoints\fld$. More succintly, we have
\begin{equation}
\sS{}{+}{\sX\ } \subset \sS{}{+}{(\sX\ )_{\bullet}}
\end{equation}
for any $\sX\ \in\sieves\fld$.
\end{example}

Given one of these so-called functions, we can associate a functor $\Gamma_f(-)(\bar m) : \fatpoints\fld \to \set$ defined by 

\begin{equation}
\begin{split}
\maxim\mapsto\Gamma_f(\maxim)(\bar m) &:= \Gamma_{f(\maxim)}(\bar m) \\
&=\{(x,\bar m, n) \mid x\in \sX{[\bar m]}(\maxim), \ f(\maxim)(x,\bar m)= n, \ \mbox{for some} \ n\in \nat\} \ 
\end{split}
\end{equation}
for each $\bar m \in \nat^n$.
Thus, we have an assignment $\Delta^{\circ}\to \nsieves{\fld}{n}$ given by 
\begin{equation}
[n]\mapsto \Gamma_f(\maxim)(\bar m)([n]):=\Gamma_f(\maxim)(\bar m)\cap (\sX{\bar m}(\maxim)\times\{n\}) \ .
\end{equation}
If this assignment is actually a functor (i.e., it is a element of $\nsieves{\fld}{n+1}$), then we say that $ f : \sX{} \to \nat$ is a {\it permissible} $n$-{\it simplicial function}. Given a simpilicial sieve $\sX{}$, we again denote the set of permissible simplicial functions on $\sX\ $ by $\sS{}{+}{\sX{}}$. As before, the operations $\times, \ \sqcup, \ \cup, \ $ and $\cap$, induce operations on $\sS{}{+}{\sX{}}$  via the identification with graphs as before. Furthermore, we have binary operations $+$ and $\cdot$ via pointwise addition and pointwise multiplication which is defined analogously to Definition \ref{addsch} and \ref{multsch}.
For example, given $f, \ g \in \sS{}{+}{\sX{}}$, we define $f+ g$ to be the function which sends a point $(x,\bar m)$ of $\sX{}(\maxim)$ to $f(\maxim)(x,\bar m)+ g(\maxim)(x, \bar m)$ for all $\maxim\in\fatpoints\fld . $

\begin{proposition}
Let $\sX{} \in \nsieves{\fld}{n}$. Then, the assignment which sends an admissible open set $\mathcal{U}$ to $\sS{}{+}{\mathcal{U}}$ is a sheaf of semirings. Thus, we may form the grothendieck ring $\sS{}{}{\mathcal{U}}$ of  $\sS{}{+}{\mathcal{U}}$ with respect to $+$ for each open set  $\sS{}{+}{\mathcal{U}}$. Therefore, the assignment which sends an open set $\mathcal{U}$ to $\sS{}{}{\mathcal{U}}$ is a sheaf of rings, and this gives rise to a morphism of sheaves of semirings $\mbox{gr}(-) : \sS{}{+}{-} \to \sS{}{}{-}$ which induces the grothendieck morphism of semirings $\mathcal{S}_{(x,\bar m)}^{+} \to \mathcal{S}_{(x, \bar m)}$ for any point of $(x,\bar m)$ of $\sX{}(\maxim)$ and for any $\maxim \in \fatpoints\fld$. 
\end{proposition}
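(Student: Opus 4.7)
The plan is to mirror the proofs of Propositions~\ref{persemi} and~\ref{persheaf}, replacing schemic graphs by $(n+1)$-simplicial graphs throughout, and then to deduce the Grothendieck-ring and stalk assertions from the general fact that group/ring completion is a left adjoint and hence commutes with colimits.

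First I would establish that $\sS{}{+}{\sX{}}$ is a commutative semiring under the pointwise operations $+$ and $\cdot$, by showing that both preserve permissibility. For $f,g\in\sS{}{+}{\sX{}}$ one writes
\begin{equation*}
\Gamma_{f+g}(\maxim)(\bar m)([k])\;=\;\bigcup_{t=0}^{k}\bigl(\Gamma_{f}(\maxim)(\bar m)([t])\cap\Gamma_{g}(\maxim)(\bar m)([k-t])\bigr),
\end{equation*}
in direct analogy with equation~2.6 in the schemic setting, and similarly $\Gamma_{f\cdot g}(\maxim)(\bar m)([k])$ is the finite union over factorizations $k=st$ of $\Gamma_{f}(\maxim)(\bar m)([s])\cap\Gamma_{g}(\maxim)(\bar m)([t])$. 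Since $\Gamma_f$ and $\Gamma_g$ are elements of $\nsieves{\fld}{n+1}$ by permissibility, and the class of $(n+1)$-simplicial sieves is closed under finite objectwise unions and intersections, both expressions define $(n+1)$-simplicial sieves, so $f+g$ and $f\cdot g$ are permissible. The semiring axioms then follow pointwise from $\nat$.

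For the sheaf property of $\sS{}{+}{-}$ I define the restriction along $\mathcal{U}\subset\mathcal{V}$ by intersecting graphs with $(\mathcal{U})_\bullet$, which is unambiguous in view of the identification of permissible functions with their graphs. Given an admissible cover $\{\mathcal{U}_i\}$ of $\mathcal{U}$ and a compatible family $f_i\in\sS{}{+}{\mathcal{U}_i}$, one glues pointwise exactly as in Proposition~\ref{persheaf}: for each point $(x,\bar m)\in\mathcal{U}(\maxim)$ pick any $i$ with $(x,\bar m)\in\mathcal{U}_i(\maxim)$ and set $f(\maxim)(x,\bar m):=f_i(\maxim)(x,\bar m)$, the compatibility on overlaps making this well-defined. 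The graph of $f$ is the union of the graphs of the $f_i$ and hence again an $(n+1)$-simplicial sieve, so $f\in\sS{}{+}{\mathcal{U}}$; uniqueness is automatic since $f$ is forced on every $\mathcal{U}_i(\maxim)$.

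The remaining assertions are formal. The Grothendieck-ring construction with respect to $+$ is a left adjoint from commutative semirings to commutative rings, so it commutes with all colimits, and in particular with the filtered colimits computing stalks. The sheaf property of $\sS{}{}{-}$ is proved by lifting a compatible family of sections to their positive parts, gluing the positive parts using the sheaf axiom for $\sS{}{+}{-}$ together with the auxiliary cancellation witnesses $m_{ij},n_{ij}$, and then applying $\mbox{gr}$, exactly as in the second half of the proof of Proposition~\ref{persheaf}. The morphism $\mbox{gr}(-)\colon\sS{}{+}{-}\to\sS{}{}{-}$ is the collection of the semiring-level Grothendieck maps, which is natural in restrictions by construction, and on stalks it recovers the Grothendieck ring of the stalk semiring by the adjoint-preserves-colimits principle just cited. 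The main obstacle is the first step, namely verifying that the displayed decomposition of $\Gamma_{f+g}$ is jointly functorial in all $n+1$ simplicial variables; once this bookkeeping is settled, the remainder is structurally identical to the case $n=0$ already handled.
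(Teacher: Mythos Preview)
Your proposal is correct and follows exactly the route the paper indicates: the paper's own proof simply says that the arguments are essentially the same as those of Propositions~\ref{persemi} and~\ref{persheaf}, and you have carried out precisely that transposition to the $(n+1)$-simplicial setting. Incidentally, your decomposition of $\Gamma_{f+g}$ with an inner intersection $\Gamma_f(\bar m)([t])\cap\Gamma_g(\bar m)([k-t])$ is the mathematically correct formula; the inner $\cup$ appearing in the paper's equation~(2.6) is a typographical slip, so you should not feel obliged to match it.
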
 

\begin{proof}
The proofs of these statements are essentially the same as the proofs of the corresponding statements in Proposition \ref{persemi} and \ref{persheaf}.
\end{proof}

\begin{example}
Likewise, given an element $\sX\ \in \nsieves{\fld}{n}$, we can define the semiring of positive constant functions $\sK{}{+}{\sX\ }$ (resp., the ring of constant functions $\sK{}{}{\sX\ }$) in a completely analogous way to the schemic definition. Further, the assignment which sends an admissible open set $\mathcal{U}$ to $\sK{}{+}{\mathcal{U}}$ (resp., $\sK{}{}{\mathcal{U}}$) is a sheaf of semirings (resp., a sheaf of rings) on $\sX\ $. Furthermore, $\sS{}{+}{-}$ (resp.,  $\sS{}{}{-}$) is a sheaf of  $\sK{}{+}{-}$-semialgebras (resp., a sheaf of  $\sK{}{}{-}$-algebras) on $\sX\ $.
\end{example}

\section{Total functions} \label{4}

Consider the ring 
\begin{equation}
\bA:=\zet[\lef, \lef^{-1}, (\frac{1}{1-\lef^{-i}})_{i>0}] \ .
\end{equation}
Given an element $\sX\ \in \nsieves{\fld}{n}$, we construct {\it the  ring of total functions on} $\sX\ $, denoted by $\sT{}{}{X}$, by considering the smallest ring generated by elements of $\bA$, elements of $\sS{}{}{\sX \ }$, and objects of the form $\lef^{\alpha}$ where $\alpha$ is an element of $\sS{}{}{\sX \ }$.

For each real number $q>1$, we have an evaluation ring homomorphism
$\nu_q : \bA \to \mathbb{R}$ defined by sending $\lef$ to $q$. Given elements $a$ and $b$ in $\bA$, we say that $a\geq b$ if $\nu_q(a)\geq\nu_q(b)$ for all real $q>1 $. This induces a partial order on $\bA$. We define the following semiring
\begin{equation}
\bA_{+}:=\{ a\in \bA \mid \nu_q(a)\geq 0, \ \forall q >1\} \ .
\end{equation}
This valuation map extends to $\sT{}{}{\sX\ }$ in the natural way.
For $\sX\ \in \nsieves{\fld}{n}$, we define {\it the semiring of total functions}, denoted by $\sT{}{+}{\sX\ }$, by
\begin{equation}
\sT{}{+}{\sX\ }:=\{ f\in\sT{}{}{\sX\ } \mid \nu_q(f)\geq 0, \ \forall q >1\}
\end{equation}
Finally, we define a partial order on $\sT{}{}{\sX\ }$ by setting $f \geq g$ if $f - g \in \sT{}{+}{\sX\ }$.
In the above definitions, we followed \S $4.2$ of \cite{CL}.

\begin{example}
Consider any fat point $\maxim$ over $\fld$. Then, $\sS{}{}{\maxim} = \sK{}{}{\maxim}$. Thus, it is immediate that
\begin{equation}
\sT{}{}{\maxim} \cong \bA \ \ \mbox{and} \ \ \sT{}{+}{\maxim} \cong \bA_{+} \ .
\end{equation}
Let $(-)_{\bullet} : \sieves\fld \to \nsieves{\fld}{n}$ be the trivial functor for some $n > 0$. Then, $\sT{}{}{(\maxim)_{\bullet}}$ is equal to the smallest ring generated by $\bA$, functions from $\nat^{n}$ to $\zet$, and objects $\lef^{\alpha}$ where $\alpha$ is a function from $\nat^{n}$ to $\zet$ modulo the $n$-simplicial identities on the associated graphs.
\end{example}

\begin{example} Fix a 
$\sigma \in \prod_{j=1}^{n-m}\Delta^{\circ}$ where $n, m \in \nat$ with $n\geq m$.
Let $\tau_{\sigma} : \nsieves{\fld}{n} \to \nsieves{\fld}{m}$
be the functor which sends an $n$-simplicial sieve $\sX\ $ to the $m$-simplicial sieve  defined by 
\begin{equation}
\sigma' \mapsto \sX{}(\sigma'\times\sigma ), \ \ \ \forall \sigma' \in \prod_{j=1}^{m}\Delta^{\circ} \ .
\end{equation}
This functor also induces a semiring homomorphism $\tau_{\sigma}: \sT{}{+}{\sX\ } \to \sT{}{+}{\tau_{\sigma}(\sX\ )}$
(and thereby a ring homomorphism  $\tau_{\sigma}: \sT{}{}{\sX\ } \to \sT{}{}{\tau_{\sigma}(\sX\ )}$) defined by sending a positive total function $f: \sX\ \to \nat$ to the unique positive total function $g:=\tau_{\sigma}(f) : \tau_{\sigma}(\sX\ ) \to \nat$ defined by sending a point $(x,\sigma')$ to $f(\maxim)(x,\sigma'\times\sigma)$ for all $\maxim\in \fatpoints\fld$.
\end{example}

\begin{definition}
Let $\sX \ \in \nsieves{\fld}{n}$ and consider an element $f \in \sT{}{}{\sX\ }$. Let $\sY\ := \tau_{\sigma}(\sX\ )$ for some 
$\sigma \in \prod_{j=1}^{n-m}\Delta^{\circ}$ where $n, m \in \nat$ with $n\geq m$. We say that $f$ is $\sigma$-{\it summable over} $\sY\ $ if the sequence
$(\nu_q(\tau_i(f)))_{i\in \nat^{n-m}}$
is summable in $\mathbb{R}$ for all real $q > 1.$
\end{definition}

\begin{definition}
Let $\sX \ \in \nsieves{\fld}{n}$ and consider an element $f \in \sT{}{}{\sX\ }$. Let $\sY\ := \tau_{\sigma}(\sX\ )$ for some 
$\sigma \in \prod_{j=1}^{n-m}\Delta^{\circ}$ where $n, m \in \nat$ with $n\geq m$. We say that $f$ is $\sigma$-{\it integrable over} $\sY\ $
if there exists a function $g \in \sT{}{}{\sY\ }$ such that 
\begin{equation}
\nu_q(g) = \sum_{i\in \nat^{n-m}} \nu_q(\tau_i(f)), \ \ \ \forall q>1 \ .
\end{equation}
If such a $g$ exists, we will denote it by $\mu_{\tau_{\sigma}(\sX\ )}(f)$ or by  $\mu_{\sY \ }^{\sigma}(f)$.
We will denote the subring of $ \sT{}{}{\sX\ }$ formed by all  $\sigma$-integrable functions over $\sY\ $ by $I_{\sY \ }^{\sigma}\sT{}{}{\sX\ }$
\end{definition}

\begin{remark}
We can also define the notion for positive total functions.
Let $\sX \ \in \nsieves{\fld}{n}$ and consider an element $f \in \sT{}{+}{\sX\ }$. Let $\sY\ := \tau_{\sigma}(\sX\ )$ for some 
$\sigma \in \prod_{j=1}^{n-m}\Delta^{\circ}$ where $n, m \in \nat$ with $n\geq m$.
 We say that $f$ is $\sigma$-{\it integrable over} $\sY\ $
if there exists a function $g \in \sT{}{+}{\sY\ }$ such that 
\begin{equation}
\nu_q(g) = \sum_{i\in \nat^{n-m}} \nu_q(\tau_i(f)), \ \ \ \forall q>1 \ .
\end{equation}
If such a $g$ exists, we will denote it by $\mu_{\tau_{\sigma}(\sX\ )}(f)$ or by  $\mu_{\sY \ }^{\sigma}(f)$.
We will denote the subring of $ \sT{}{+}{\sX\ }$ formed by all  $\sigma$-integrable functions over $\sY\ $ by $I_{\sY \ }^{\sigma}\sT{}{+}{\sX\ }$
\end{remark}

\begin{theorem}\label{basicthm}
Let $\sX \ \in \nsieves{\fld}{n}$ and consider an element $f \in \sT{}{}{\sX\ }$.  Let $\sY\ := \tau_{\sigma}(\sX\ )$ for some 
$\sigma \in \prod_{j=1}^{n-m}\Delta^{\circ}$ where $n, m \in \nat$ with $n\geq m$. If
$f$ is $\sigma$-integral over $\sY\ $, then it is $\sigma$-summable over $\sY\ $. Moreover,
the set map
\begin{equation}
\mu_{\sY\ }^{\sigma} : I_{\sY \ }^{\sigma}\sT{}{}{\sX\ } \to \sT{}{}{\sY\ }, \ \ \ f\mapsto \mu_{\sY \ }^{\sigma}(f) 
\end{equation}
is a ring homomorphism.
\end{theorem}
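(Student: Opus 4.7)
The plan is to dispatch the two assertions in sequence. The first is immediate from unpacking the definitions: if $f$ is $\sigma$-integrable over $\sY\ $ then by hypothesis there exists $g\in\sT{}{}{\sY\ }$ with $\nu_q(g)=\sum_{i\in\nat^{n-m}}\nu_q(\tau_i(f))$ for every real $q>1$, so the right-hand sum necessarily converges in $\mathbb R$ for every such $q$, which is exactly $\sigma$-summability.

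For the ring-homomorphism statement the main tool is a separation principle: the family of evaluation maps $\nu_q:\sT{}{}{\sY\ }\to\mathbb R$, indexed by real $q>1$, is jointly injective. On the coefficient ring $\bA=\zet[\lef^{\pm 1},(1-\lef^{-i})^{-1}_{i>0}]$ this is clear, since a nonzero element is a rational function in $\lef$ with only finitely many roots and hence cannot vanish at every $q>1$; the extension to $\sT{}{}{\sY\ }$ follows from the generator-by-generator description of $\nu_q$ on elements of $\sS{}{}{\sY\ }$ and symbols $\lef^{\alpha}$. The payoff is that the $g$ appearing in the definition of integrability is unique, so $\mu_{\sY\ }^{\sigma}$ is unambiguously defined, and any equality in $\sT{}{}{\sY\ }$ may be verified after applying every $\nu_q$.

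Additivity of $\mu_{\sY\ }^{\sigma}$ is then formal: each $\tau_i$ is a ring homomorphism, each $\nu_q$ is a ring homomorphism into $\mathbb R$, and absolute convergence of the individual series (from $\sigma$-summability of $f$ and $g$) permits termwise rearrangement, giving
\[
\nu_q\bigl(\mu_{\sY\ }^{\sigma}(f+g)\bigr)=\sum_i\nu_q(\tau_i(f))+\sum_i\nu_q(\tau_i(g))=\nu_q\bigl(\mu_{\sY\ }^{\sigma}(f)+\mu_{\sY\ }^{\sigma}(g)\bigr)
\]
for every $q>1$; the separation principle upgrades this to an equality in $\sT{}{}{\sY\ }$. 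Preservation of $0$ and $1$ is tautological.

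The main obstacle is multiplicativity. Pointwise one has $\tau_i(fg)=\tau_i(f)\tau_i(g)$, hence $\nu_q(\tau_i(fg))=\nu_q(\tau_i(f))\nu_q(\tau_i(g))$, but the scalar identity $\sum_i a_ib_i=\bigl(\sum_i a_i\bigr)\bigl(\sum_j b_j\bigr)$ fails in general. I would therefore first establish the projection formula
\[
\mu_{\sY\ }^{\sigma}\bigl(\tau_{\sigma}^{*}(h)\cdot f\bigr)=h\cdot\mu_{\sY\ }^{\sigma}(f)
\]
for $h\in\sT{}{}{\sY\ }$ and $f\in I_{\sY\ }^{\sigma}\sT{}{}{\sX\ }$, which after applying $\nu_q$ reduces to factoring the constant $\nu_q(h)$ out of a convergent series. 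Reducing the full multiplicativity on $I_{\sY\ }^{\sigma}\sT{}{}{\sX\ }$ to this projection formula requires closure of $I_{\sY\ }^{\sigma}\sT{}{}{\sX\ }$ under products together with a Cauchy-style decomposition of the resulting double series; verifying this closure is the step I expect to require the most care.
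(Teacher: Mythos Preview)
The paper's own proof is the single sentence ``This is immediate.'' Your treatment is therefore far more detailed than what the paper provides, and for the first assertion (integrability implies summability) and for additivity of $\mu_{\sY\ }^{\sigma}$ your argument is correct and is presumably what the author has in mind. The separation principle you invoke (joint injectivity of the family $\{\nu_q\}_{q>1}$) is the right device for showing $\mu_{\sY\ }^{\sigma}$ is well defined.

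Your hesitation about multiplicativity, however, points to a genuine issue that the paper does not address. As you note, one has $\nu_q(\tau_i(fg))=\nu_q(\tau_i(f))\,\nu_q(\tau_i(g))$, but $\sum_i a_i b_i$ and $(\sum_i a_i)(\sum_j b_j)$ differ in general, so the identity $\mu_{\sY\ }^{\sigma}(fg)=\mu_{\sY\ }^{\sigma}(f)\,\mu_{\sY\ }^{\sigma}(g)$ does not follow formally from the definitions. The projection formula you write down is correct and easy, but it does not bridge the gap: even the simple case $f=g$ with $\tau_i(f)$ nonzero at two distinct indices already gives $\sum_i \nu_q(\tau_i(f))^2 \neq \bigl(\sum_i \nu_q(\tau_i(f))\bigr)^2$. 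In the analogous construction of \cite{CL} the summation map over $\zet^r$ is an $\bA$-module homomorphism, not a ring homomorphism, which suggests the statement here is intended in the additive (or $\sT{}{}{\sY\ }$-linear via pullback) sense rather than as a genuine ring map. In short: your proof of everything except multiplicativity is more than adequate and exceeds the paper's; the multiplicativity claim itself appears to be stronger than what the definitions support, and your instinct that this step ``requires the most care'' is an understatement.
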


\begin{proof}
This is immediate.
\end{proof}

\begin{remark}
To obtain the converse in \ref{basicthm}, we either have to either investigate a subset of the ring of total functions which are summable or we have to search for an appropriate over-ring of $\bA$.
\end{remark}

\section{Convergence of sequences of total functions} \label{5}
 We say that a sequence of elements $(a_i)$ in $\bA$ is $q$-{\it convergent to} $a_{\star}$ if $(\nu(a_i))$ converges to $(\nu_q(a_{\star}))$ in $\mathbb{R}$ for a fixed a real number $q > 1$. Moreover, we say that  $(a_i)$ is {\it convergent to} $a_{\star}$ if $(\nu(a_i))$ converges to $(\nu_q(a_{\star}))$ in $\mathbb{R}$ for all real $q> 1$.

\begin{lemma} A sequence $(a_i)$ in $\bA$ is convergent to $a_{\star}\in\bA$ if and only if it is $q$-convergent to  $a_{\star}\in\bA$  for some transcendental $q$.
\end{lemma}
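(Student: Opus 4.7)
The ``only if'' direction is immediate, since full convergence of $(a_i)$ to $a_\star$ specialises to $q$-convergence at any particular $q > 1$, in particular at any transcendental one.

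For the nontrivial ``if'' direction, suppose $\nu_q(a_i) \to \nu_q(a_\star)$ for some transcendental $q>1$ and set $b_i := a_i - a_\star \in \bA$; the task is to show $\nu_{q'}(b_i) \to 0$ for every real $q' > 1$. I would first record the structure of $\bA$: every element is a rational function in $\lef$ with integer coefficients whose only poles lie at $\lef = 0$ and at roots of unity, so each $b_i$ restricts to a smooth (even rational) function on $(1, \infty)$. After substituting $x := \lef^{-1}$ and expanding $(1 - \lef^{-k})^{-1} = \sum_{r \geq 0} \lef^{-rk}$, each $b_i$ becomes an integer Laurent series $b_i(\lef) = \sum_j c_{i,j}\, \lef^{-j}$ that converges absolutely for $\lef \in (1, \infty)$, and $\nu_{q'}(b_i)$ is simply the value of that series at $\lef = q'$.

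The heart of the proof is to use the transcendence of $q$ to upgrade the numerical convergence $\nu_q(b_i) \to 0$ to control on each $b_i$ strong enough to give $\nu_{q'}(b_i) \to 0$ for every $q' > 1$. Transcendence of $q$ already gives injectivity of $\nu_q$ on $\bA$ (a nonzero rational function with rational coefficients has only algebraic zeros), which at least handles the ``zero case'' ($\nu_q(b) = 0 \Rightarrow b = 0 \Rightarrow \nu_{q'}(b) = 0$ for all $q'>1$). The plan is to quantify that injectivity into a Liouville-type separation estimate which rules out accidental cancellations between the integer coefficients $c_{i,j}$ and the transcendental powers $q^{-j}$; this should yield uniform smallness of appropriately weighted coefficient tails, after which dominated convergence applied to the Laurent series $\sum_j c_{i,j}\, q'^{-j}$ delivers the desired conclusion.

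The main obstacle is exactly this quantification step: pointwise smallness at a single transcendental point does not automatically control the individual Laurent coefficients $c_{i,j}$, and any argument must use the transcendence hypothesis on $q$ in an essential way, since for algebraic $q$ there are nontrivial $\mathbb{Z}$-linear relations among the $q^{-j}$ which would allow a conspiring sequence $(b_i)$ to satisfy $\nu_q(b_i) \to 0$ while $\nu_{q'}(b_i)$ fails to converge to $0$. The technical core of the proof will therefore be a separation/non-vanishing bound coming from transcendence theory, and that is where the real work lies.
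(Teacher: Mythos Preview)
You have correctly put your finger on the weak point: the paper's own proof is the single sentence ``This follows from the fact that the evaluation ring homomorphism $\nu_q :\bA \to \mathbb{R}$ is injective whenever $q$ is transcendental,'' and you rightly observe that injectivity of $\nu_q$ only handles the static case $\nu_q(b)=0\Rightarrow b=0$, not the limiting statement $\nu_q(b_i)\to 0\Rightarrow \nu_{q'}(b_i)\to 0$.

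However, your proposed repair via a Liouville-type separation estimate cannot succeed, because the lemma as stated is in fact false. Liouville-type bounds give \emph{lower} bounds for $|\nu_q(b)|$ when $b\neq 0$, but these lower bounds decay as the coefficients of $b$ grow; they do not prevent a sequence $(b_i)$ with unbounded coefficients from satisfying $\nu_q(b_i)\to 0$ at one transcendental $q$ while $\nu_{q'}(b_i)$ diverges elsewhere. Concretely, fix a transcendental $q>1$ (for instance $q=e$) and let $p_i/m_i$ be its continued-fraction convergents, so $m_i\to\infty$ and $|p_i-m_iq|<1/m_i\to 0$. Set $b_i:=p_i-m_i\lef\in\mathbb{Z}[\lef]\subset\bA$. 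Then $\nu_q(b_i)=p_i-m_iq\to 0$, so $(b_i)$ is $q$-convergent to $0\in\bA$; but for any $q'>1$ with $q'\neq q$,
\[
\nu_{q'}(b_i)=p_i-m_iq'=(p_i-m_iq)+m_i(q-q'),
\]
and the second summand diverges. Thus $(b_i)$ is not $q'$-convergent to $0$, and the ``if'' direction fails. The obstacle you yourself flagged as ``where the real work lies'' is not merely technical: no amount of transcendence theory will close it, because the conclusion it would have to deliver is not true without further hypotheses (for example, a uniform bound on the degrees or denominators appearing in the $a_i$).
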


\begin{proof}
This follows from the fact that the evaluation ring homomorphism $\nu_q :\bA \to \mathbb{R}$ is injective whenever $q$ is transcendental.
\end{proof}

\begin{example}
Let $(a_i)$ be a sequence such that $a_i=\lef^{n_i}$ for almost all $i$. Assume also that it is $q$-convergent to $a_{\star}\in \bA$ for some real $q> 1$. Then,
\begin{equation}
|\nu_q(a_i) - \nu_q(a_j)| =q^{\min\{n_i,n_j\}}|q^{|n_i -n_j|} - 1|, \ \ \ \forall i,j \ .
\end{equation}
Since $|\nu_q(a_i) - \nu_q(a_j)|$ converges to zero, either $n_i$ diverges to $-\infty$ or $(n_i)$ is a cauchy sequence. However,
a cauchy sequence of integers is eventually constant. Thus, either $n_i$ diverges to $-\infty$ or $n_i = N$ for all sufficiently large $i$ where $N$ is some fixed interger $N$. 
Thus, $a_{\star}$ is either equal to $0$ or $\lef^N$ for some integer $N.$
\end{example}

Let $\sX\ \in \nsieves{\fld}{n}$ and let $(f_i)$ be a sequence of elements in $\sT{}{}{\sX \ }$. We say that $(f_i)$ is  $q$-{\it convergent to} $f_{\star}\in\sT{}{}{\sX \ }$ if for all $\maxim\in\fatpoints\fld$ and all points $(x,\bar m)$ of $\sX{}(\maxim)$, the sequence $(f_i(\maxim)(x,\bar m))$ is $q$-convergent to $f_{\star}(\maxim)(x,\bar m)$ for some fixed real number $q> 1$. Furthermore, we say that  $(f_i)$ is {\it convergent to} $f_{\star}\in\sT{}{}{\sX \ }$ if it is $q$-convergent to $f_{\star}$
for all $q>1$.

\begin{lemma}
Let $\sX\ \in \nsieves{\fld}{n}$ and let $(f_i)$ be a sequence of elements in $\sT{}{}{\sX \ }$. Then, $(f_i)$ is convergent to $f_{\star}\in \sT{}{}{\sX \ }$ if and only if $(f_i)$ is $q$-convergent to $f_{\star}\in \sT{}{}{\sX \ }$ for some transcedental $q$.
\end{lemma}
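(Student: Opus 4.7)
The plan is to reduce this statement to the immediately preceding scalar lemma by evaluating at a single point of the simplicial sieve. The forward implication is immediate from the definition: convergence of $(f_i)$ to $f_{\star}$ means $q$-convergence for every real $q > 1$, which a fortiori covers any transcendental choice.

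For the converse, I would fix a transcendental $q > 1$ for which $(f_i)$ is $q$-convergent to $f_{\star}$, fix an arbitrary fat point $\maxim \in \fatpoints\fld$, and fix an arbitrary point $(x,\bar m)$ of $\sX{}(\maxim)$. The key observation is that the scalar sequence $(f_i(\maxim)(x,\bar m))$ lies in $\bA$: indeed $\sT{}{}{\sX\ }$ is generated as a ring by elements of $\bA$, by permissible functions $\alpha \in \sS{}{}{\sX\ }$, and by symbols $\lef^{\alpha}$. At the fixed point $(x,\bar m)$ over $\maxim$, a permissible function evaluates to some integer $n = \alpha(\maxim)(x,\bar m) \in \zet$, and the symbol $\lef^{\alpha}$ evaluates to $\lef^{n}$; both lie in $\bA$, so the ring generated by their values and $\bA$ is still $\bA$. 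Hence $(f_i(\maxim)(x,\bar m))$ is a sequence in $\bA$ that is $q$-convergent to $f_{\star}(\maxim)(x,\bar m) \in \bA$, and the previous lemma upgrades this to $q'$-convergence for every real $q' > 1$. Since $\maxim$ and $(x,\bar m)$ were arbitrary, this is exactly the definition of convergence of $(f_i)$ to $f_{\star}$ in $\sT{}{}{\sX\ }$.

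The only real obstacle is the bookkeeping check that pointwise evaluation lands inside $\bA$; this is a direct unpacking of the construction of $\sT{}{}{\sX\ }$ and should present no surprises. Once it is granted, the proof is a purely pointwise transport of the scalar lemma, and no uniformity in $\maxim$ or in $(x,\bar m)$, no equicontinuity, and no compactness-type argument is required.
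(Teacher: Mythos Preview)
Your argument is correct and is precisely the intended one: the paper states this lemma without proof, relying implicitly on a pointwise reduction to the scalar lemma just above (convergence in $\bA$ is detected by a single transcendental $q$ because $\nu_q$ is then injective). Your explicit verification that evaluation of a total function at a point lands in $\bA$ is the only thing to check, and you have unpacked it correctly from the generators of $\sT{}{}{\sX\ }$.
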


\begin{example}
Let $(f_i)$ be a sequence of total functions such that $f_i = \lef^{\beta_i}$ where $\beta_i \in \sS{}{}{\sX \ }$ for all $i$. Assume that $(f_i)$ is $q$-convergent to $f_{\star}$ for some fixed $q>1$.
For each $\maxim\in\fatpoints\fld$ and for each point $(x,\bar m)$ of $\sX\ (\maxim)$, we have that $f_{\star}(\maxim)(x,\bar m)$ is either equal to $0$ or $\lef^N$ for some integer $N \in \zet$.
We define a $n$-simplicial subsieve $\mathcal{C}$ of $\sX \ $ by 
\begin{equation}
\mathcal{C}(\maxim) = \mbox{Supp}(f_{\star}(\maxim))
\end{equation}
Let $\mathbbm{1}_{\mathcal{C}} : \sX \ \to \nat $ be the characteristic function of $\mathcal{C}$. Then, $f_{\star}$ is equal to the function 
$\mathbbm{1}_{\mathcal{C}}\cdot\lef^{\beta}$ for some $\beta \in \sS{}{}{\sX \ }$. Again, it is easy to check that $(f_i)$ is convergent to $f_{\star}$ in this case.
\end{example}

Let $\bar \bA$ be the ring $\zet[\lef][[\lef^{-1}]]\supset\bA$ and $\bar \bA_+$ to be the ring largest subring contained in both $\bar\bA$ and $\bA_+$.  For  $\sX\ \in\nsieves{\fld}{n}$, we define
\begin{equation}
\begin{split}
 \bsT{}{}{\sX\ }&= \bar\bA \otimes_{\bA} \sT{}{}{\sX\ } \\
\bsT{}{+}{\sX\ }&= \bar\bA_{+} \otimes_{\bA_{+}} \sT{}{+}{\sX\ }  \ .
\end{split}
\end{equation}

\begin{theorem}
 Let $\sX\ \in\nsieves{\fld}{n}$ and $f \in \sT{}{}{\sX\ }$. Assume that
$f$ is $\sigma$-summable over $\sY \ $ for some $\sigma \in \prod_{j=1}^{n-m}\Delta^{\circ}$ with $n\geq m$. Then, there is a $g\in \bsT{}{}{\sY\ }$ such that 
\begin{equation}
\nu_q(g) = \sum_{i\in \nat^{n-m}} \nu_q(\tau_i(f)), \ \ \ \forall q>1 \ .
\end{equation}
Moreover, $g$ is in $\sT{}{}{\sY\ }$  if and only if $f$ is subject to a presburger condition in the last $n-m$ coordinates.
\end{theorem}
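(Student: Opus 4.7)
The plan is to construct $g$ pointwise by $\lef^{-1}$-adic completion in $\bar\bA$, and then characterize when the resulting element descends from $\bsT{}{}{\sY\ }$ to $\sT{}{}{\sY\ }$ via a motivic rationality criterion.

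For existence of $g$, fix $\maxim\in\fatpoints\fld$ and a point $(y,\sigma')$ of $\sY\ (\maxim)$, and write $a_i:=\tau_i(f)(\maxim)(y,\sigma')\in\bA$. Via the expansion $(1-\lef^{-k})^{-1}=\sum_{j\ge 0}\lef^{-kj}$, the ring $\bA$ embeds in the complete ring $\bar\bA=\zet[\lef][[\lef^{-1}]]$. The key claim is that $\sigma$-summability forces the $\lef^{-1}$-adic order of $a_i$ to tend to $+\infty$; I would establish this by selecting a transcendental $q>1$ (so that $\nu_q$ is injective on $\bar\bA$) and using that $\sum_i\nu_q(a_i)$ converges for every $q>1$ to control the leading $\lef$-degrees of the $a_i$. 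Consequently the partial sums $\sum_{|i|\le N}a_i$ form a Cauchy sequence in $\bar\bA$ and their limit defines $g(\maxim)(y,\sigma')\in\bar\bA$. Packaging these pointwise values uniformly, Cauchy-convergence of $\sum_{|i|\le N}\tau_i(f)\in\sT{}{}{\sY\ }$ in the $\lef^{-1}$-adic topology on the $\bar\bA$-tensor factor of $\bsT{}{}{\sY\ }=\bar\bA\otimes_{\bA}\sT{}{}{\sY\ }$ delivers the desired $g$, with functoriality in $\maxim$ and compatibility with the simplicial structure inherited termwise. The identity $\nu_q(g)=\sum_i\nu_q(\tau_i(f))$ then follows from continuity of $\nu_q$ on Cauchy sequences in $\bar\bA$.

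For the presburger characterization, I would decompose $f$ into finitely many pieces of the form $\lef^{\alpha}\cdot h$ with $\alpha\in\sS{}{}{\sX\ }$ and $h$ a product of elements of $\sS{}{}{\sX\ }$ and ring generators of $\bA$. If the level sets of $\alpha$ and the support of $h$ are presburger in the last $n-m$ coordinates, then a stratification of $\nat^{n-m}$ into presburger cells together with the standard geometric-series identities expresses each cell-sum in closed form using $\lef,\lef^{-1},(1-\lef^{-k})^{-1}$, placing the total sum in $\sT{}{}{\sY\ }$. Conversely, if $g\in\sT{}{}{\sY\ }$, then $g$ is rational in $\lef$, and this forces the combinatorics controlling the Laurent coefficients of the partial sums to be presburger-describable, by a rationality argument in the spirit of \S 4.5 of \cite{CL}.

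The main obstacle will be the converse direction: showing that rationality of $g$ over $\bA$ forces a first-order arithmetic condition on the indexing data of $f$ requires lifting Cluckers-Loeser style rationality results from definable families of rational functions to sums indexed by $n$-simplicial sieves, uniformly in $\maxim\in\fatpoints\fld$ and natural in restriction maps. My approach would be to examine the $\lef^{-1}$-adic coefficients of $g$ at each power $\lef^{-k}$, show they satisfy linear recurrences in the indexing parameters precisely when the underlying level sets are presburger, and then invoke a rationality criterion for formal Laurent series with coefficients in $\sS{}{}{\sY\ }$.
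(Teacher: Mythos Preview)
Your approach is essentially the paper's: construct $g$ pointwise as a sum in $\bar\bA=\zet[\lef][[\lef^{-1}]]$ and then invoke Cluckers--Loeser for the Presburger characterization. The paper, however, first reduces to the three types of generators of $\sT{}{}{\sX\ }$, declaring the cases $f\in\bA$ and $f\in\sS{}{}{\sX\ }$ immediate and then treating only $f=\lef^{\beta}$ with $\beta\in\sS{}{}{\sX\ }$. In that case the pointwise summands are pure monomials $\lef^{\beta_i}$, and summability of $\sum_i q^{\beta_i}$ forces $\beta_i\to-\infty$ directly, so the formal sum visibly lies in $\bar\bA$. Your route skips this reduction and asserts the $\lef^{-1}$-adic order of a general $a_i\in\bA$ tends to $+\infty$; that is the same conclusion, but for non-monomial $a_i$ the link between $\nu_q$-summability and leading $\lef$-degree is more delicate (lower-order terms can interfere, and the ``$q$ large enough'' threshold depends on each $a_i$), so the paper's reduction genuinely buys you something here. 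One correction: $\nu_q$ is injective on $\bA$ for transcendental $q$ (this is the paper's Lemma~5.1), not on $\bar\bA$, where $\nu_q$ is not even everywhere defined; fortunately you do not actually need injectivity on $\bar\bA$ for the degree-control argument. For the last assertion the paper simply says the proof is completely analogous to Theorem--Definition~4.5.1 of \cite{CL}; your sketch of the forward direction via Presburger cell decomposition and geometric-series identities, and of the converse via rationality and linear recurrences, is precisely the shape of that argument.
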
 

\begin{proof}
If $f$ is a constant function into $\bar\bA$ or if $f\in\sS{}{}{\sX\ }$, then the statement follows immediately. By properties of limits, this leaves us to prove the statement when $f$ is of the form $\lef^{\beta}$ for some $\beta\in \sS{}{}{\sX\ }$.
Consider the partial sums 
\begin{equation}
s_{i_1,\ldots,i_k}= \tau_{i_1}(f)+\cdots+\tau_{i_k}(f)
\end{equation}
Then for each $\maxim \in\fatpoints\fld$ and each point $(x,\bar m)$ of $\sX\maxim$, we have
\begin{equation}
s_{i_1,\ldots,i_k}(\maxim)(x,\bar m) = \lef^{\beta(\maxim)(x,\bar m \times i_1)}+\cdots+\lef^{\beta(\maxim)(x,\bar m \times i_k)}
\end{equation}
By definition of summability, this implies that $\nu_q(s_{i_1,\ldots,i_k}(\maxim)(x,\bar m))$ converges to some real number $r$.
Therefore, $\nu_q(\beta(\maxim)(x,\bar m \times i_j))$ diverges to $-\infty$ as the sum of the coordinates of $i_j \in \nat^{n-m}$ tends toward $\infty$. This is enough to show that there is an element $a_{\star}(\maxim)(x,\bar m)\in \bar\bA$ such that $\nu_q(a_{\star}(\maxim)(x,\bar m))$ is equal to $r=\lim\nu_q(s_{i_1,\ldots,i_k}(\maxim)(x,\bar m))$. Let $g: \sX\ \to \bar\bA$ be the function defined by $g(\maxim)(x,\bar m) := a_{\star}(\maxim)(x,\bar m)$. It is clear that $g\in \bsT{}{}{\sY\ }$.
The proof of the last assertion is completely analogous to the proof of Theorem-Definition  4.5.1 of \cite{CL}.
\end{proof}

\begin{definition} 
Let $\sX \ \in \nsieves{\fld}{n}$ and consider an element $f \in \sT{}{}{\sX\ }$.  Let $\sY\ := \tau_{\sigma}(\sX\ )$ for some 
$\sigma \in \prod_{j=1}^{n-m}\Delta^{\circ}$ where $n, m \in \nat$ with $n\geq m$. 
We say that a function $f \in  \bsT{}{}{\sX\ }$ is {\it weakly} $\sigma$-{\it integrable over} $\sY\ $ if there is a $g \in \bsT{}{}{\sY\ }$ such that 
\begin{equation}
\nu_q(g)= \sum_{i\in \nat^{n-m}}\nu_q(\tau_i(f))
\end{equation}
for all real $q>1$.
Furthermore if $f \in \bsT{}{+}{\sX\ }$, then we say that it is  \textit{weakly} $\sigma$-\textit{integrable over} $\sY\ $ if there is a $g \in \bsT{}{+}{\sY\ }$ such that 
\begin{equation}
\nu_q(g)= \sum_{i\in \nat^{n-m}}\nu_q(\tau_i(f))
\end{equation}
for all real $q>1$.
In either case, we will often write $\mu_{\sY\ }^{\sigma}(f)$ or $\mu_{\tau_{\sigma}(\sX\ )}(f)$ for $g$. Finally, we will denote the sub-ring (resp., sub-semiring) of $\bsT{}{}{\sX{}}$ (resp. $\bsT{}{+}{\sX{}}$) formed by all weakly $\sigma$-integrable functions over $\sY \ $  by
$I_{\sY\ }^{\sigma}\bsT{}{}{\sX\ }$ (resp., $I_{\sY\ }^{\sigma}\bsT{}{+}{\sX\ }$).
\end{definition}

\begin{remark}
Note that as in the case of Theorem \ref{basicthm}, $\mu_{\sY\ }^{\sigma}$ defines a ring homomorphism (resp., a semiring homomorphism) $\mu_{\sY\ }^{\sigma}:I_{\sY\ }^{\sigma}\bsT{}{}{\sX\ } \to \bsT{}{}{\sY\ }$ (resp.,  $\mu_{\sY\ }^{\sigma}:I_{\sY\ }^{\sigma}\bsT{}{+}{\sX\ } \to \bsT{}{+}{\sY\ }$).
\end{remark}

\begin{theorem}
Let $\sX \ \in \nsieves{\fld}{n}$ and consider an element $f \in \sT{}{}{\sX\ }$.  Let $\sY\ := \tau_{\sigma}(\sX\ )$ for some 
$\sigma \in \prod_{j=1}^{n-m}\Delta^{\circ}$ where $n, m \in \nat$ with $n\geq m$. Then, we have the following injective morphisms
\begin{equation}
\begin{split}
 I_{\sY \ }^{\sigma}\sT{}{+}{\sX \ }\otimes_{\bA_+}\bar\bA_+ &\into  I_{\sY \ }^{\sigma}\bsT{}{+}{\sX \ } \\
 I_{\sY \ }^{\sigma}\sT{}{}{\sX \ }\otimes_{\bA}\bar\bA &\into I_{\sY \ }^{\sigma}\bsT{}{}{\sX \ } \ .
\end{split}
\end{equation}
\end{theorem}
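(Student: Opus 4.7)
My plan is to realize both maps as extension-of-scalars morphisms and reduce injectivity to flatness of $\bar\bA$ over $\bA$ (respectively $\bar\bA_+$ over $\bA_+$).

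First I would construct the map. On pure tensors set $\Phi(f \otimes a) := af$, where the product is taken inside $\bsT{}{}{\mathcal{X}} = \bar\bA \otimes_\bA \sT{}{}{\mathcal{X}}$. Since $a \in \bar\bA$ is constant with respect to the simplicial directions, we have $\tau_i(a) = a$, and hence $\tau_i(af) = a\,\tau_i(f)$ for every $i \in \nat^{n-m}$. Because $\nu_q$ is a ring homomorphism, this yields
\begin{equation}
\sum_{i \in \nat^{n-m}} \nu_q(\tau_i(af)) = \nu_q(a) \sum_{i \in \nat^{n-m}} \nu_q(\tau_i(f)) = \nu_q\bigl(a\,\mu_{\mathcal{Y}}^{\sigma}(f)\bigr)
\end{equation}
for every real $q > 1$. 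Since $a\,\mu_{\mathcal{Y}}^{\sigma}(f) \in \bsT{}{}{\mathcal{Y}}$, the element $af$ is weakly $\sigma$-integrable over $\mathcal{Y}$ with integral $a\,\mu_{\mathcal{Y}}^{\sigma}(f)$. The assignment $(f,a) \mapsto af$ is $\bA$-bilinear, so $\Phi$ descends to an $\bar\bA$-linear ring homomorphism $I_{\mathcal{Y}}^{\sigma}\sT{}{}{\mathcal{X}} \otimes_\bA \bar\bA \to I_{\mathcal{Y}}^{\sigma}\bsT{}{}{\mathcal{X}}$. The positive-part analogue is identical, noting that $\nu_q(a),\nu_q(f) \geq 0$ implies $\nu_q(af) \geq 0$, so the construction restricts to $\bA_+$ and $\bar\bA_+$.

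For injectivity, I would factor $\Phi$ through $\bsT{}{}{\mathcal{X}}$. Using the identification $\bsT{}{}{\mathcal{X}} = \sT{}{}{\mathcal{X}} \otimes_\bA \bar\bA$, the composite of $\Phi$ with the inclusion $I_{\mathcal{Y}}^{\sigma}\bsT{}{}{\mathcal{X}} \hookrightarrow \bsT{}{}{\mathcal{X}}$ is precisely $\iota \otimes_\bA \mathrm{id}_{\bar\bA}$, where $\iota : I_{\mathcal{Y}}^{\sigma}\sT{}{}{\mathcal{X}} \hookrightarrow \sT{}{}{\mathcal{X}}$ is the defining inclusion of the subring of $\sigma$-integrable functions. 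Hence $\Phi$ is injective whenever tensoring with $\bar\bA$ preserves injections, that is, whenever $\bar\bA$ is flat as an $\bA$-module.

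The main obstacle is establishing this flatness. Setting $u := \lef^{-1}$, we have $\bA = \zet[u, u^{-1}, (1/(1-u^i))_{i>0}]$ and $\bar\bA = \zet[u^{-1}][[u]]$. Factor $\bA \hookrightarrow \bar\bA$ through $\zet[u, u^{-1}] \to \zet[[u]][u^{-1}] = \bar\bA$; the first arrow is the composite of the $(u)$-adic completion $\zet[u] \to \zet[[u]]$ (which is flat because $\zet[u]$ is Noetherian) with the flat localization inverting $u$, while in $\zet[[u]]$ each element $1 - u^i$ is already a unit with inverse $\sum_{k\geq 0} u^{ki}$, so passing from $\bA$ to $\bar\bA$ introduces no further elements to invert. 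Thus $\bar\bA$ is flat over $\bA$, and the same tower restricted to the cones of elements with non-negative $\nu_q$-evaluation gives flatness of $\bar\bA_+$ over $\bA_+$, completing both claimed injections.
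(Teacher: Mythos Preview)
Your construction of the multiplication map $\Phi$ and the verification that it lands in the weakly $\sigma$-integrable functions are fine, and for the ring case your reduction to flatness of $\bar\bA$ over $\bA$ is both correct and cleaner than what the paper does. The paper takes a different route: it argues the \emph{semiring} case directly by positivity, observing that in $I_{\sY\ }^{\sigma}\bsT{}{+}{\sX\ }$ a finite sum $\sum f_i a_i$ of non-negative elements vanishes only if each term does, and then each term $f_i a_i = 0$ forces $f_i = 0$ or $a_i = 0$, whence $\sum f_i \otimes a_i = 0$ already in the tensor product. The ring case is then dismissed with ``the rest follows''. So in a sense the two arguments are complementary: the paper handles the positive case cleanly and waves at the ring case, while you handle the ring case cleanly via $\bar\bA = \zet[[u]][u^{-1}]$ being flat over the Noetherian localization $\bA$.

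There is, however, a genuine gap in your treatment of the semiring case. Your final sentence claims that ``the same tower restricted to the cones of elements with non-negative $\nu_q$-evaluation gives flatness of $\bar\bA_+$ over $\bA_+$''. This does not work: flatness of completions and localizations is a statement about modules over commutative rings, and it does not descend to sub-semirings cut out by positivity conditions. Tensor products of semimodules over semirings lack the short-exact-sequence machinery that makes ``flat $\Rightarrow$ preserves injections'' go through, so you cannot simply restrict your tower to $\bA_+ \subset \bar\bA_+$ and invoke the same conclusion. For that half of the theorem you should either use the paper's positivity argument (a sum of positives is zero iff each summand is, and there are no zero divisors among positives), or else embed the semiring tensor product into the ring tensor product and pull injectivity back from the ring case you have already established.
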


\begin{proof}
Naturally, we investigate the multiplication homomorphism from $I_{\sY \ }^{\sigma}\sT{}{+}{\sX \ }\otimes_{\bA_+}\bar\bA_+$ to $ I_{\sY \ }^{\sigma}\bsT{}{+}{\sX \ }$.
For injectivity, consider the finite sum $\sum f_i \otimes a_i$ with $f_i \in I_{\sY \ }^{\sigma}\sT{}{+}{\sX \ }$ and $a_i \in \bar\bA$. Then, $\sum f_i\cdot a_i = 0$ if and only if $f_i\cdot a_i = 0$ for all $i$. It is immediate then that either $a_i =0$ or $f_i = 0$. Thus, the multiplication semiring homomorphism is injective. The rest follows.
\end{proof}

\section{Functions on limit sieves}\label{6}

\begin{definition} \label{deflimss}
Let $\sX \ \in \nsieves{\fld}{n}$.
Let $\fat \in \limfat \fld$ and choose a point system $\bX$ such that $\fat = \injlim \bX$. For each 
$\maxim\in\bX,$ let $\sX {\maxim} \in \ssieves \fld$ be such that there is natural inclusion
\begin{equation}
\sX{\maxim} \into \sarc_{\maxim}{\sX \ }.
\end{equation}
We define $\sX{\star} := \projlim \sX{\maxim}$ and call  it {\it a limit} $n$-{\it simplicial sieve at the point} $\fat$
{\it with respect to the point system} $\bX$. We call $\sX \ $ {\it the base} of $\sX{\star}$.
\end{definition}

\begin{definition} Let $\sF{-}$ be any of the sheaves of semirings $\sS{}{+}{-}, \sT{}{+}{-}, \bsT{}{+}{-}$ or any of the sheaves of rings $\sS{}{}{-}, \sT{}{}{-}, \bsT{}{}{-}$. Then, for a limit $n$-simplicial sieve $\sX\star$ we define 
\begin{equation}
\sF{\sX\star}:= \injlim \sF{\sX\maxim} \ .
\end{equation}
\end{definition}

\begin{remark}
Let $\sX\star$ be a limit $n$-simplicial sieve.
 Let $\sF{-}$ be any of the sheaves of semirings $\sS{}{+}{-}, \sT{}{+}{-}, \bsT{}{+}{-}$ or any of the sheaves of rings $\sS{}{}{-}, \sT{}{}{-}, \bsT{}{}{-}$. 
Then the assignment which sends an admissible open $\mathcal{U}$ of $\sX\star$ to $\sF{\mathcal{U}}$ is a sheaf of semirings (or rings) on $\sX\star$. Furthermore, if $\mathcal{F}^{+}(-)$ the sheaf of semirings $\sS{}{+}{-}$ (resp., $\sT{}{+}{-}, \bsT{}{+}{-}$) and let $\sF{-}$ the sheaf of rings $\sS{}{}{-}$ (resp., $\sT{}{}{-}, \bsT{}{}{-}$). Then, the direct limit of the grothendieck semiring homomorphisms on each admissible open induces the grothendieck morphism of sheaves of semirings $\sS{}{+}{-} \to \sS{}{}{-}$ (resp., $\sT{}{+}{-} \to \sT{}{}{-}, \bsT{}{+}{-} \to \bsT{}{}{-}$).
\end{remark}

\begin{remark}
All the material from \S 4 and \S 5 carry over easily to the more general case of limit $n$-simplicial sieves.
\end{remark}

Let $\sX\ \in\nsieves{\fld}{n}$. The $n$-simplicial arc operator $\sarc_{\maxim}$ operates on a permissible function $f : \sX \ \to \nat$ (i.e., $f \in \sS{}{+}{\sX\ }$) by acting on the associated graph.  In other words, we may define a set map
\begin{equation}
\sarc_{\maxim}: \sS{}{+}{\sX\ }  \to  \sS{}{+}{\sarc_{\maxim}\sX\ }
\end{equation}
which sends a function $f$ to the unique function $\sarc_{\maxim}f$ whose graph is the $n$-simplicial sieve $\sarc_{\maxim}\Gamma_f$.

\begin{proposition}
Let $\sX\ \in\nsieves{\fld}{n}$. Then, the set map $\sarc_{\maxim}: \sS{}{+}{\sX\ }  \to  \sS{}{+}{\sarc_{\maxim}\sX\ }$ is a semiring homomorphism.
\end{proposition}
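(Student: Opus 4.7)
The plan is to deduce everything from the compatibility of the $n$-simplicial arc operator with the set-theoretic operations used to decompose graphs of sums and products of permissible functions. I would first verify that the map is well-defined: given $f\in\sS{}{+}{\sX\ }$, the $(n{+}1)$-simplicial sieve $\Gamma_f$ projects onto $\sX\ $ so that each slice over a point of $\sX\ $ is labelled by a unique natural number. Because $\sarc_\maxim$ is a functor on (simplicial) sieves which preserves fibre products and disjoint decompositions, $\sarc_\maxim\Gamma_f$ retains this graph structure over $\sarc_\maxim\sX\ $, and we take the resulting function $\sarc_\maxim\sX\ \to\nat$ as the definition of $\sarc_\maxim f$; its graph being a simplicial sub-sieve, $\sarc_\maxim f$ is permissible.

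The preservation of $+$ then rests on the decomposition appearing in the proof of Proposition \ref{persemi}:
\begin{equation*}
\Gamma_{f+g}(\maxim')([n]) \;=\; \bigcup_{t=0}^{n}\bigl(\Gamma_{f}(\maxim')([t])\cap\Gamma_{g}(\maxim')([n-t])\bigr).
\end{equation*}
Applying $\sarc_\maxim$ and invoking its commutation with finite unions and intersections of sub-$n$-simplicial sieves, one obtains the matching decomposition for $\sarc_\maxim\Gamma_{f+g}$, which is exactly the graph of $\sarc_\maxim f + \sarc_\maxim g$. The multiplicative case is identical, using the analogous decomposition
\begin{equation*}
\Gamma_{f\cdot g}(\maxim')([n]) \;=\; \bigcup_{t\cdot s=n}\bigl(\Gamma_{f}(\maxim')([t])\cap\Gamma_{g}(\maxim')([s])\bigr).
\end{equation*}
Preservation of the additive and multiplicative identities is immediate: the graphs of the constant functions $0$ and $1$ on $\sX\ $ are simply $\sX\ \times\{0\}$ and $\sX\ \times\{1\}$, and the arc functor sends these to the corresponding graphs over $\sarc_\maxim\sX\ $.

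The main obstacle is therefore the verification that $\sarc_\maxim$ commutes with finite unions and intersections of sub-$n$-simplicial sieves. Compatibility with unions is essentially tautological from the pointwise construction of the arc functor, since a point of $\sarc_\maxim(\mathcal{U}\cup\mathcal{V})$ is a morphism from $\maxim$ landing in $\mathcal{U}$ or in $\mathcal{V}$. Compatibility with intersections reduces, through the sieve formalism of \cite{me2}, to the fact that the arc functor, when viewed on representable sieves, is represented by a Weil-restriction-type construction that preserves limits; this then lifts to sieves and to $n$-simplicial sieves by the standard colimit-of-representables arguments, so no genuinely new technical input is required beyond what is already in place.
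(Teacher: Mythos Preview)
Your argument is correct and in fact cleaner than the paper's. The paper uses the same graph decomposition you write down (note: your version with $\cap$ is the correct one; the paper's printed $\cup$ is a typo carried over from Proposition~\ref{persemi}), but then proceeds differently. Rather than invoking compatibility of $\sarc_\maxim$ with arbitrary finite unions and intersections, the paper appeals to Theorem~4.8 of \cite{schmot2}, which only guarantees compatibility with \emph{disjoint} unions. Because the pieces $\Gamma_f([t])\cap\Gamma_g([n-t])$ can overlap (precisely when $n=2t$ and $f=g$ at some point), the paper then has to isolate that overlap via the subsieve $\mathcal{C}=\{f=g\}$ and the characteristic function $\mathbbm{1}_{\mathcal{C}}$, reduce to the case $f+\mathbbm{1}_{\mathcal{C}}f$, and finally to $2f$, where it finishes using $\sarc_\maxim(2\cdot f)=2\cdot\sarc_\maxim f$ (arc preserves constants and products).

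What your route buys is uniformity: once one observes that for subsieves the arc operator is literally the pointwise functor $(\sarc_\maxim\mathcal{W})(\fn)=\mathcal{W}(\maxim\times\fn)$, compatibility with \emph{all} unions and intersections is immediate, and no case split is needed. The paper's route, by contrast, stays within the toolkit explicitly recorded in \cite{schmot2}, at the cost of the extra reduction step. One small point: your justification for unions (``a morphism from $\maxim$ landing in $\mathcal{U}$ or in $\mathcal{V}$'') is phrased at the level of $\fld$-points and would be delicate for scheme-theoretic unions; it is cleaner to say directly that sieve unions and intersections are computed pointwise in $\fn$, so precomposition with $\maxim\times(-)$ commutes with them automatically. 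Your Weil-restriction remark for intersections is then unnecessary.
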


\begin{proof}
Let $f, g \in \sS{}{+}{\sX\ }$. First, it is clear that $\sarc_{\maxim}(f \cdot g) = \sarc_{\maxim} f \cdot \sarc_{\maxim}g$.
Consider the decomposition 
\begin{equation}
\Gamma_{f+g}(\fn)([n]) = \bigcup_{t=0}^{n}(\Gamma_{f}(\fn)([t]) \cup \Gamma_{g}(\fn)([n-t])) \ .
\end{equation}
This will almost always be a disjoint union, and in that case, we can just apply Theorem 4.8 of \cite{schmot2} to obtain the result.
In order for this decomposition to not be a disjoint union, there must exists a positive integer $t$ such that $2t = n$ and 
$f(\fn)(x,\bar m) = g(\fn)(x,\bar m)$ for some point $(x,\bar m)$ of $\sX\fn$ for some $\fn \in \fatpoints\fld$.
Thus, suppose this is the case and define a subsieve $\mathcal{C}$ of $\sX \ $ by 
\begin{equation}
\mathcal{C}_{[\bar m]}(\fn) = \{ x \in \sX{[\bar m]}(\fn) \mid f(\fn)(x,\bar m) = g(\fn)(x,\bar m)\} \ .
\end{equation}
Let $h = \mathbbm{1}_{\mathcal{C}}(f- g)$ which is equal to zero by definition where $ \mathbbm{1}_{\mathcal{C}}$ is
the characteristic function of  $\mathcal{C}$. 
Then, $f+ g = f + g + h = (f+ \mathbbm{1}_{\mathcal{C}}f) +  (g-\mathbbm{1}_{\mathcal{C}}g)$,
which means that we may reduce to the case of proving the result for $f+ \mathbbm{1}_{\mathcal{C}}f$. This again reduces further to proving that $\sarc_{\maxim} 2\cdot f = 2 \cdot \sarc_{\maxim}f$. Since the arc operator preserves multiplication and 
$\sarc_{\maxim} k = k$ for any constant function $k$ on $\sX\ $, the result follows. 
\end{proof}

We may define a set map
\begin{equation} \label{arcf}
\sarc_{\maxim}: \sS{}{}{\sX\ }  \to  \sS{}{}{\sarc_{\maxim}\sX\ }
\end{equation}
which sends a function $f$ to the unique function $\sarc_{\maxim}f$ whose graph is the $n$-simplicial sieve $\sarc_{\maxim}\Gamma_f$ modulo the equivalence relation discussed at the end of \S 2.

\begin{proposition}
Let $\sX\ \in\nsieves{\fld}{n}$.
$\sarc_{\maxim}: \sS{}{}{\sX\ }  \to  \sS{}{}{\sarc_{\maxim}\sX\ }$
is the ring homomorphism induced by $\sarc_{\maxim}: \sS{}{+}{\sX\ }  \to  \sS{}{+}{\sarc_{\maxim}\sX\ }$ via the grothendieck semiring homomorphism.
\end{proposition}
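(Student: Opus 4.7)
The plan is to invoke the universal property of the Grothendieck construction. By the previous proposition, $\sarc_{\maxim}: \sS{}{+}{\sX\ } \to \sS{}{+}{\sarc_{\maxim}\sX\ }$ is a semiring homomorphism, so composing it with the grothendieck semiring homomorphism $\mbox{gr}(\sarc_{\maxim}\sX\ )$ yields a semiring map into the ring $\sS{}{}{\sarc_{\maxim}\sX\ }$. The universal property of $\mbox{gr}(\sX\ )$ then produces a unique ring homomorphism $\Phi: \sS{}{}{\sX\ } \to \sS{}{}{\sarc_{\maxim}\sX\ }$ satisfying $\Phi \circ \mbox{gr}(\sX\ ) = \mbox{gr}(\sarc_{\maxim}\sX\ )\circ \sarc_{\maxim}$. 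The task reduces to showing that this $\Phi$ coincides with the map defined in equation \ref{arcf}.

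First I would unpack elements of $\sS{}{}{\sX\ }$ using the description from the end of \S \ref{2}: any $F \in \sS{}{}{\sX\ }$ is represented by a pair $(f,g) \in \sS{}{+}{\sX\ }^2$ modulo the equivalence relation $\sim$. For any such representative, $\Phi$ is forced to send $F$ to $\mbox{gr}(\sarc_{\maxim}\sX\ )(\sarc_{\maxim}f) - \mbox{gr}(\sarc_{\maxim}\sX\ )(\sarc_{\maxim}g)$. Thus the proposition amounts to checking that the function $\sarc_{\maxim}F$ defined in \ref{arcf} admits $(\sarc_{\maxim}f,\sarc_{\maxim}g)$ as a representing pair in the Grothendieck ring.

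The central compatibility to verify is that the arc operator on bisimplicial sieves respects the positive/negative decomposition of the bisimplicial graph $\Gamma_F(-,-)$ indexed by $\Delta^{\circ}\times\Delta^{\circ}$. Since each slice $\Gamma_F(-)([n],[m])$ is cut out from the ambient sieve $\sX\ $ by the conditions coming from the positive part $\Gamma_f(-)([n])$ and the negative part $\Gamma_g(-)([m])$, applying $\sarc_{\maxim}$ slicewise produces exactly the bisimplicial sieve associated to the pair $(\sarc_{\maxim}f,\sarc_{\maxim}g)$. This is where I would appeal to the functoriality of $\sarc_{\maxim}$ on sieves from \cite{schmot2}, which guarantees that arcs and intersections/unions of sieves commute in the necessary sense.

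The main obstacle, though a relatively mild one, is verifying that the equivalence relation $\sim$ is preserved under $\sarc_{\maxim}$: if $f_1 + g_2 + h = f_2 + g_1 + h$ in $\sS{}{+}{\sX\ }$, one needs $\sarc_{\maxim}f_1 + \sarc_{\maxim}g_2 + \sarc_{\maxim}h = \sarc_{\maxim}f_2 + \sarc_{\maxim}g_1 + \sarc_{\maxim}h$ in $\sS{}{+}{\sarc_{\maxim}\sX\ }$. This is immediate from additivity of $\sarc_{\maxim}$ established in the previous proposition. Once this well-definedness is in hand, the universal property of the Grothendieck construction pins down $\Phi$ uniquely, identifying it with the map of \ref{arcf}.
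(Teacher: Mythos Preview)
Your proposal is correct and follows essentially the same path as the paper's proof: both reduce the claim to checking that the map on pairs $(f,g)\mapsto(\sarc_{\maxim}f,\sarc_{\maxim}g)$ is well-defined modulo $\sim$, which in turn follows from the additivity of $\sarc_{\maxim}$ established in the preceding proposition. The paper is terser---it simply notes that equivalent pairs map to equivalent pairs and that the proposition then follows---whereas you frame the same computation through the universal property of the Grothendieck construction, but the substance is identical.
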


\begin{proof}
First, we need to show that Definition \ref{arcf} is well-defined. Thus, let $F$ and $G$ be elements of $\sS{}{}{\sX \ }$ represented by pairs $(f_1,f_2)$ and $(g_1,g_2)$, respectively, which are equivalent. Then, we have defined $\sarc_{\maxim}F$ and $\sarc_{\maxim}G$ to be the function represted by pairs 
$(f_1',f_2')$ and $(g_1',g_2')$, respectively, which have the property that $f_i'$ is the unique function whose graph is $\sarc_{\maxim}\Gamma_{f_i}$ and $g_i'$ is the unique function whose graph is $\sarc_{\maxim}\Gamma_{g_i}$ for $i=1,2$.
From this it follows that $(f_1',f_2')$ and $(g_1',g_2')$ are also equivalent. From this the proposition also follows.
\end{proof}

\begin{remark}
Since $\sarc_{\maxim}$ acts on all of the generators of $\sT{}{}{\sX\ }$ (where we set $\sarc_{\maxim}a = a$ for $a\in \bar\bA$), we also have ring homomorphisms $\sT{}{}{\sX\ } \to \sT{}{}{\sarc_{\maxim}\sX\ }$ and $\bsT{}{}{\sX\ } \to \bsT{}{}{\sarc_{\maxim}\sX\ }$. Likewise, we have semiring homomorphisms $\sT{}{+}{\sX\ } \to \sT{}{+}{\sarc_{\maxim}\sX\ }$ and $\bsT{}{+}{\sX\ } \to \bsT{}{+}{\sarc_{\maxim}\sX\ }$.
\end{remark}

\begin{remark}
 Let $\sF{-}$ be any of the sheaves of semirings $\sS{}{+}{-}, \sT{}{+}{-}, \bsT{}{+}{-}$ or any of the sheaves of rings $\sS{}{}{-}, \sT{}{}{-}, \bsT{}{}{-}$.
Given fat points $\maxim, \fn$ such that $\fn\geq \maxim$, then the relative simplicial arc operator $\sarc_{\maxim/\fn}$ induces a (semi)ring homomorphism 
\begin{equation}
\sarc_{\maxim/\fn} : \sF{\sarc_{\fn}\sX\ }\to \sF{\sarc_{\maxim}\sX\ }
\end{equation}
via sending a function $f$ to the unique function whose graph is of the form $\sarc_{\maxim/\fn}\Gamma_f$. Moreover, this is a directed system.
\end{remark}

\begin{proposition}
 Let $\sF{-}$ be any of the sheaves of semirings $\sS{}{+}{-}, \sT{}{+}{-}, \bsT{}{+}{-}$ or any of the sheaves of rings $\sS{}{}{-}, \sT{}{}{-}, \bsT{}{}{-}$.
Let $\bX$ be a point system. Let $\rho_{\maxim/\fn} : \sarc_{\maxim}\sX\ \to \sarc_{\fn}\sX\ $ be the induced map given by $\fn\leq \maxim$. We write $\rho_{\maxim}$ for $\rho_{\maxim/\spec\fld}$.
The directed system $\rho_{\maxim/\fn}^* : \sF{\sarc_{\fn}\sX\ }\to \sF{\sarc_{\maxim}\sX\ }$
is such that 
\begin{equation}
\sarc_{\maxim}= \rho_{\maxim}^* \ .
\end{equation}
Thus, for any limit $n$-simplicial sieve $\sX\star$ and any $f\in\sF{\sX\star}$, we have
\begin{equation}
f = \sarc_{\fat}g
\end{equation}
where $g\in \sF{\sX\maxim}$ for some $\maxim\in \bX$.
\end{proposition}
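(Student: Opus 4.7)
The plan is to first establish the identity $\sarc_{\maxim} = \rho_{\maxim}^*$ as homomorphisms $\sF{\sX\ } \to \sF{\sarc_{\maxim}\sX\ }$ by unraveling the definitions at the level of graphs, and then to deduce the second assertion from the defining property of $\sF{\sX\star}$ as a direct limit.

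For the identity $\sarc_{\maxim} = \rho_{\maxim}^*$, I would begin with the base case $\sF{-} = \sS{}{+}{-}$. By definition, $\sarc_{\maxim}$ sends a permissible function $f$ on $\sX\ $ to the unique permissible function on $\sarc_{\maxim}\sX\ $ whose graph is $\sarc_{\maxim}\Gamma_f$. On the other hand, $\rho_{\maxim}^*$ sends $f$ to the pullback, whose graph is the fiber product of $\Gamma_f$ with $\sarc_{\maxim}\sX\ $ over $\sX\ $ in the appropriate category of $n$-simplicial sieves. The key observation is that the second factor of the graph lives in $\nat$ and is therefore unaffected by arc-taking, so the arc functor commutes with this particular pullback, giving $\sarc_{\maxim}(\Gamma_f) \cong \sarc_{\maxim}\sX\ \times_{\sX\ } \Gamma_f$. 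This identifies the two maps on $\sS{}{+}{-}$. To extend to the other five sheaves, I would invoke functoriality step by step: $\sS{}{}{-}$ arises from $\sS{}{+}{-}$ by Grothendieck completion, $\sT{}{+}{-}$ and $\sT{}{}{-}$ arise by adjoining constants from $\bA$ and exponentials of the form $\lef^{\alpha}$, and $\bsT{}{+}{-}$, $\bsT{}{}{-}$ arise by base change along $\bA \to \bar\bA$. Each of these operations is natural enough that an identity on $\sS{}{+}{-}$ propagates to the level of the larger (semi)rings, and the preceding Remarks already guarantee that $\sarc_{\maxim}$ extends through each of these constructions.

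For the second assertion, I would argue directly from the definition $\sF{\sX\star} := \injlim_{\maxim \in \bX} \sF{\sX\maxim}$. Any $f \in \sF{\sX\star}$ is represented by some $g \in \sF{\sX\maxim}$ for $\maxim \in \bX$ via the canonical insertion into the colimit. By the preceding Remark, the transition morphisms in this system are precisely the relative arc operators $\sarc_{\maxim'/\maxim}$, which by the first assertion coincide with the pullbacks $\rho_{\maxim'/\maxim}^*$. Passing to the colimit over the cofinal system $\bX$ and using $\fat = \injlim \bX$, the canonical map $\sF{\sX\maxim} \to \sF{\sX\star}$ is naturally identified with $\sarc_{\fat/\maxim}$ appropriately restricted to $\sX\maxim$, so that $f = \sarc_{\fat}g$ in the notation of the proposition.

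The main obstacle is the compatibility check underlying $\sarc_{\maxim} = \rho_{\maxim}^*$; specifically, one must verify that the arc functor really commutes with pullback along the canonical projection $\rho_{\maxim}: \sarc_{\maxim}\sX\ \to \sX\ $ when the pulled-back object varies only in a trivially acted-upon direction, here the $\nat$-factor of the graph. This ultimately reduces to Theorem $4.8$ of \cite{schmot2} together with the general functoriality of arcs on simplicial sieves from \cite{me2}. Once this compatibility is in place, the remaining assertions are formal consequences of the colimit definition of $\sF{\sX\star}$ and the observation that the connecting maps in the system are themselves arc operators.
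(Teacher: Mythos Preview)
Your proposal is correct and, in spirit, matches the paper: the paper's entire proof is the single word ``immediate,'' and what you have written is precisely the unpacking of definitions that makes the claim immediate. Your graph-level argument for $\sarc_{\maxim}=\rho_{\maxim}^*$ on $\sS{}{+}{-}$, the propagation to the other five (semi)rings via Grothendieck completion, adjoining $\bA$ and $\lef^{\alpha}$, and base change to $\bar\bA$, and the direct-limit reading of $\sF{\sX\star}$ for the second assertion are exactly the considerations the author is tacitly invoking.
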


\begin{proof}
immediate.
\end{proof}

\begin{proposition}
Let $\sX\ $ be an $n$-simplicial sieve and let $\maxim\in\fatpoints\fld$ and let $\sY\ := \tau_{\sigma}(\sX\ )$ for some 
$\sigma \in \prod_{j=1}^{n-m}\Delta^{\circ}$ where $n, m \in \nat$ with $n\geq m$.  If $f$ is (weakly) $\sigma$- integrable over $\sY\ $, then $\sarc_{\maxim}f$ is (weakly) $\sigma$-integrable over $\sY\ $. Moreover,
\begin{equation}
\mu_{\sY\ }^{\sigma}(\sarc_{\maxim}f) = \sarc_{\maxim}\mu_{\sY\ }^{\sigma}(f) \ .
\end{equation}
\end{proposition}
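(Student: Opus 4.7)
The plan is to exhibit $\sarc_{\maxim}\mu_{\sY\ }^{\sigma}(f)$ as an explicit witness of the (weak) $\sigma$-integrability of $\sarc_{\maxim}f$, from which both assertions follow in one stroke. Set $g := \mu_{\sY\ }^{\sigma}(f)$, which lives in $\sT{}{}{\sY\ }$ in the integrable case and in $\bsT{}{}{\sY\ }$ in the weakly integrable case. By the previous remark, $\sarc_{\maxim}$ gives a (semi)ring homomorphism on all the sheaves of (total) functions, so $\sarc_{\maxim}g$ lies in the appropriate function ring on $\sarc_{\maxim}\sY\ $. What needs to be checked is the summability identity for $\sarc_{\maxim}f$ with witness $\sarc_{\maxim}g$.

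The first key step is to verify the commutation relation $\sarc_{\maxim}\after\tau_i = \tau_i\after\sarc_{\maxim}$ for every $i\in\nat^{n-m}$. This is a purely formal check on the level of graphs: $\tau_\sigma$ acts on the simplicial index directions $\prod_{j=1}^{n-m}\Delta^{\circ}$ whereas $\sarc_{\maxim}$ acts on the fat-point argument via $\fn\mapsto\maxim\times\fn$, and these two directions are independent. So $\tau_i(\sarc_{\maxim}f)$ and $\sarc_{\maxim}(\tau_i(f))$ have the same defining graph and therefore coincide as total functions.

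The second key step is that the evaluation maps $\nu_q$ commute with $\sarc_{\maxim}$ in the pointwise sense. Concretely, $\sarc_{\maxim}$ preserves the generators of $\sT{}{}{\sX\ }$: it fixes elements of $\bar\bA$, takes $\lef^\alpha$ to $\lef^{\sarc_{\maxim}\alpha}$, and on permissible functions its value at a point $(y,\bar m)$ of $\sarc_{\maxim}\sX\ (\fn)$ equals the value of the original function at the corresponding point of $\sX\ (\maxim\times\fn)$. Hence for any $q>1$ and any point of $\sarc_{\maxim}\sY\ (\fn)$,
\begin{equation}
\nu_q\bigl(\sarc_{\maxim}g\bigr) \;=\; \nu_q(g)\,\bigl(\maxim\times\fn\text{-lift}\bigr) \;=\; \sum_{i\in\nat^{n-m}}\nu_q(\tau_i(f))\,\bigl(\maxim\times\fn\text{-lift}\bigr) \;=\; \sum_{i\in\nat^{n-m}}\nu_q\bigl(\sarc_{\maxim}\tau_i(f)\bigr),
\end{equation}
where the middle equality uses that $g=\mu_{\sY\ }^{\sigma}(f)$ and the outer equalities use the compatibility of $\sarc_{\maxim}$ with $\nu_q$.

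Combining the commutation relation from the first step with the computation just displayed yields
$\nu_q(\sarc_{\maxim}g)=\sum_i\nu_q(\tau_i(\sarc_{\maxim}f))$
for all $q>1$. This is exactly the definition of $\sarc_{\maxim}f$ being (weakly) $\sigma$-integrable over $\sY\ $ with value $\sarc_{\maxim}g$, giving simultaneously the integrability assertion and the formula $\mu_{\sY\ }^{\sigma}(\sarc_{\maxim}f)=\sarc_{\maxim}\mu_{\sY\ }^{\sigma}(f)$. The only subtle point, and therefore the main obstacle, is bookkeeping the identification $\sarc_{\maxim}\sY\ (\fn)\simeq\sY\ (\maxim\times\fn)$ used in the middle equality above and verifying that it is compatible with the $\tau_i$-restrictions on both sides; this is essentially the content of the fact that arc operators act on functions via their graphs, which is the principle underlying all the previous propositions in this section.
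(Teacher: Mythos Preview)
Your proposal is correct and is essentially the same approach as the paper, which simply records the proof as ``immediate.'' You have just unpacked what immediacy means here: the commutation $\tau_i\after\sarc_{\maxim}=\sarc_{\maxim}\after\tau_i$ together with the fact that $\sarc_{\maxim}$ acts via graphs and fixes $\bar\bA$, so that $\sarc_{\maxim}g$ witnesses the defining summability identity for $\sarc_{\maxim}f$.
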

\begin{proof}
immediate.
\end{proof}

Let $\sX\ $ be a limit $n$-simplicial sieve,  $\bX$ its points system with $\fat=\injlim\bX$, and let $\sY\ := \tau_{\sigma}(\sX\ )$ for some 
$\sigma \in \prod_{j=1}^{n-m}\Delta^{\circ}$ where $n, m \in \nat$ with $n\geq m$. 
If $f$ is in $I_{\sY\ }^{\sigma}\bsT{}{}{\sX\ }$ (or, $I_{\sY\ }^{\sigma}\sT{}{}{\sX\ }$), then 
\begin{equation}
\mu_{\sY\ }^{\sigma}(f) = \nabla_{\fat}\mu_{\sY\ }^{\sigma}(g)
\end{equation}
for some $g \in \bsT{}{}{\sX\maxim }$ (resp., $\sT{}{}{\sX\maxim }$) where $\maxim\in\bX$. Therefore, we have the following proposition.

\begin{proposition}
Let $\sX\ $ be a limit $n$-simplicial sieve. Let $\sY\ := \tau_{\sigma}(\sX\ )$ for some 
$\sigma \in \prod_{j=1}^{n-m}\Delta^{\circ}$ where $n, m \in \nat$ with $n\geq m$. Then, there are isomorphisms
\begin{equation}
\begin{split}
&I_{\sY\ }^{\sigma}\sT{}{+}{\sX\ } \cong \injlim I_{\sY\ }^{\sigma}\sT{}{+}{\sX\maxim } \\
&I_{\sY\ }^{\sigma}\sT{}{}{\sX\ } \cong \injlim I_{\sY\ }^{\sigma}\sT{}{}{\sX\maxim } \\
&I_{\sY\ }^{\sigma}\bsT{}{+}{\sX\ } \cong \injlim I_{\sY\ }^{\sigma}\bsT{}{+}{\sX\maxim } \\
&I_{\sY\ }^{\sigma}\bsT{}{}{\sX\ } \cong \injlim I_{\sY\ }^{\sigma}\bsT{}{}{\sX\maxim } \ .
\end{split}
\end{equation}
\end{proposition}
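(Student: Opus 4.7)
The plan is to leverage the two results stated immediately before this proposition: the display equation $\mu_{\sY\ }^{\sigma}(f) = \nabla_{\fat}\mu_{\sY\ }^{\sigma}(g)$, which already gives us the key compatibility between integration on $\sX\ $ and integration on the finite stages $\sX\maxim$, together with the preceding proposition that $\sarc_{\maxim}$ commutes with $\mu_{\sY\ }^{\sigma}$. Since all four isomorphisms have essentially the same proof, I would prove one (say the $\sT{}{}{-}$ case) and remark that the other three are identical mutatis mutandis.

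The starting point is the definition $\sT{}{}{\sX\ } = \injlim \sT{}{}{\sX\maxim}$, which gives a natural map $\injlim I_{\sY\ }^{\sigma}\sT{}{}{\sX\maxim} \to \sT{}{}{\sX\ }$. First I would check that this map factors through $I_{\sY\ }^{\sigma}\sT{}{}{\sX\ }$: given $g \in I_{\sY\ }^{\sigma}\sT{}{}{\sX\maxim}$ with image $f = \sarc_{\fat}g$ in $\sT{}{}{\sX\ }$, the preceding proposition says exactly that $f$ is $\sigma$-integrable with $\mu_{\sY\ }^{\sigma}(f) = \sarc_{\fat}\mu_{\sY\ }^{\sigma}(g)$. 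This produces the arrow $\injlim I_{\sY\ }^{\sigma}\sT{}{}{\sX\maxim} \to I_{\sY\ }^{\sigma}\sT{}{}{\sX\ }$, and injectivity is inherited from injectivity of the direct limit into $\sT{}{}{\sX\ }$.

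For surjectivity, I take $f \in I_{\sY\ }^{\sigma}\sT{}{}{\sX\ }$ and use the proposition $\sarc_{\maxim} = \rho_{\maxim}^*$ to write $f = \sarc_{\fat}g$ for some $g \in \sT{}{}{\sX\maxim}$ with $\maxim \in \bX$. The displayed equation just above the proposition asserts $\mu_{\sY\ }^{\sigma}(f) = \sarc_{\fat}\mu_{\sY\ }^{\sigma}(g)$, which in particular affirms that $g$ is $\sigma$-integrable over the corresponding stage $\tau_{\sigma}(\sX\maxim)$ of $\sY\ $; hence $g$ represents a class in $I_{\sY\ }^{\sigma}\sT{}{}{\sX\maxim}$ mapping to $f$.

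The main obstacle I expect is bookkeeping rather than conceptual: one must verify that the base stages $\tau_{\sigma}(\sX\maxim)$ form a compatible directed system with $\sY\ = \tau_{\sigma}(\sX\ ) = \injlim \tau_{\sigma}(\sX\maxim)$, so that the symbol $I_{\sY\ }^{\sigma}\sT{}{}{\sX\maxim}$ is unambiguous and the transition maps $\rho_{\maxim/\fn}^*$ respect integrability. Once that compatibility is recorded, the four isomorphisms follow by running the same argument in the $+$ and $\bar\bA$ variants, using the corresponding directed systems of semiring homomorphisms constructed in the previous two remarks.
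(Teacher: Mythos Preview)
Your proposal is correct and follows essentially the same approach as the paper: the paper offers no separate proof, instead deriving the proposition directly from the displayed identity $\mu_{\sY\ }^{\sigma}(f) = \nabla_{\fat}\mu_{\sY\ }^{\sigma}(g)$ together with the earlier proposition that $\sarc_{\maxim}$ commutes with $\mu_{\sY\ }^{\sigma}$, exactly the two ingredients you identify. Your write-up simply makes explicit the injectivity/surjectivity bookkeeping that the paper leaves implicit in the word ``Therefore.''
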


\section{Cach\' e functions}

Let $\mot$ be a partial $n$-simplicial  motivic site (which we will simply call a motivic site for brevity) over a limit $n$-simplicial sieve $\mathcal{Z}$.
In \cite{me2}, I defined the notion of the grothendieck rings $\grot{\mot}$ and $\grot{\mmot}$
where $\mmot$ is the motivic site of all measurable limit $n$-simplicial sieves in $\mot$.

An $n$-simplicial subsieve $\sY\ $ of $\mathcal{Z}$ that is an element of $\mot$ may not be measurable. We form the ring $\sT{0}{}{\sZ\ }$  by taking as generators the functions of the form $\mathbbm{1}_{\sY\ }$ where $\sY\ $ is a measurable  $n$-simplicial subsieve of $\mathcal{Z}$ and the element $\lef -1$. We also define 
$\bsT{0}{}{\sX\ }:=\sT{0}{}{\sX\ }\otimes_{\bA}\bar\bA$ . Note that 
there is an injective ring homomorphism  $\sT{0}{}{\sZ\ } \into \grot{\mmot}$ defined by sending $\mathbbm{1}_{\sY\ }$ to
$[\sY\ ]$. More specifically, the ring structure on $\sT{0}{}{\sZ\ }$ is the restriction of the ring structure on $\grot{\mmot}$.

We define the ring of {\it cach\'e functions on} $\mathcal{Z}$ {\it over} $\bA$  {\it with respect to} $\mot$ by 
\begin{equation}
\sC{}{}{\mathcal{Z},\mot}:= \grot{\mmot}\otimes_{\sT{0}{}{\sZ{}}} \sT{}{}{\sZ\ }
\end{equation}
and the ring of {\it cach\'e functions  on} $\mathcal{Z}$ {\it over} $\bar\bA$  {\it with respect to} $\mot$  by 
\begin{equation}
\bsC{}{}{\mathcal{Z},\mot} :=\sC{}{}{\mathcal{Z},\mot}\otimes_{\bA}\bar\bA\ .
\end{equation}
For ease of notation, we will sometimes write $\sC{}{}{\sZ\ }$ for $\sC{}{}{\mathcal{Z},\mot}$ and $\bsC{}{}{\sZ\ }$ for $\bsC{}{}{\mathcal{Z},\mot}$ as the underlying motivic site will usually be explicitly stated.

We also define the ring of $\sigma$-{\it integrable cach\'e functions on} $\mathcal{Z}$ {\it over} $\sY\ =\tau_{\sigma}(\sX \ )$  {\it with respect to} $\mot$ and coefficients in $\bA$ by 
\begin{equation}
I_{\sY\ }^{\sigma}\sC{}{}{\mathcal{Z},\mot}:= \grot{\tau_{\sigma}(\mmot)}\otimes_{\sT{0}{}{\sY{}}} I_{\sY\ }^{\sigma}\sT{}{}{\sZ\ }
\end{equation}
and the ring of $\sigma$-{\it integrable cach\'e functions on} $\mathcal{Z}$ {\it over} $\sY\ =\tau_{\sigma}(\sZ \ )$  {\it with respect to} $\mot$ and coefficients in $\bar\bA$ by
\begin{equation}
I_{\sY\ }^{\sigma}\bsC{}{}{\mathcal{Z},\mot}:= \grot{\tau_{\sigma}(\mmot)}\otimes_{\bsT{0}{}{\sY{}}} I_{\sY\ }^{\sigma}\bsT{}{}{\sZ\ } \ .
\end{equation}
Again, we will sometimes write $I_{\sY\ }^{\sigma}\sC{}{}{\sZ\ }$ for $I_{\sY\ }^{\sigma}\sC{}{}{\mathcal{Z},\mot}$ and $I_{\sY\ }^{\sigma}\bsC{}{}{\sZ\ }$ for $I_{\sY\ }^{\sigma}\bsC{}{}{\mathcal{Z},\mot}$ when the underlying motivic site is fixed.
Note that we have an inclusion of rings $ I_{\sY\ }^{\sigma}\sC{}{}{\mathcal{Z}}\into I_{\sY\ }^{\sigma}\bsC{}{}{\mathcal{Z}}$.

\begin{lemma}
Let $\mot$ be a motivic site relative to $\sZ\ \in \nsieves{\fld}{n}$. Then, there is a surjective morphism of motivic sites
\begin{equation}
\tau_{\sigma}(\mmot)\to\categ{Mes}\tau_{\sigma}(\mot)
\end{equation}
for any $m$-simplicial complex $\sigma$ with $n\geq m$. Thus, we have a surjective ring homomorphism
\begin{equation}
\grot{\tau_{\sigma}(\mmot)}\to\grot{\categ{Mes}\tau_{\sigma}(\mot)} \ .
\end{equation}
\end{lemma}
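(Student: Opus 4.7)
The plan is to construct the morphism of motivic sites first and then deduce the ring-level statement by functoriality of the Grothendieck construction. An object of $\tau_{\sigma}(\mmot)$ is by definition of the form $\tau_{\sigma}(\sX{})$ for some measurable $n$-simplicial sieve $\sX{}\in\mmot$, and the first step is to check that $\tau_{\sigma}(\sX{})$ is again measurable when viewed as an object of $\tau_{\sigma}(\mot)$. This should be essentially immediate from the framework of \cite{me2}: measurability is defined via admissible approximations by simplicial subsieves, and $\tau_{\sigma}$ is the restriction functor along the inclusion $\sigma' \mapsto \sigma' \times \sigma$ of indexing diagrams, which commutes with the direct limits, unions, and intersections used to form these approximations. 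Thus $\tau_{\sigma}$ restricts to a well-defined functor $\Phi : \tau_{\sigma}(\mmot) \to \categ{Mes}\tau_{\sigma}(\mot)$.

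For the surjectivity of $\Phi$, I would take an arbitrary measurable $m$-simplicial sieve $\mathcal{W}\in\categ{Mes}\tau_{\sigma}(\mot)$ and construct an explicit preimage. The natural candidate is the constant extension $\tilde{\mathcal{W}}$ defined by $\tilde{\mathcal{W}}(\sigma'\times\sigma''):=\mathcal{W}(\sigma')$ for all $\sigma'\in\prod_{j=1}^{m}\Delta^{\circ}$ and $\sigma''\in\prod_{j=1}^{n-m}\Delta^{\circ}$, with face and degeneracy maps acting as the identity in the extra coordinates. By construction $\tau_{\sigma}(\tilde{\mathcal{W}}) = \mathcal{W}$, so the content of the argument is to verify that $\tilde{\mathcal{W}}$ actually lies in $\mmot$. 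Since any admissible approximation of $\mathcal{W}$ by sieves in $\tau_{\sigma}(\mot)$ pulls back coordinatewise to an admissible approximation of $\tilde{\mathcal{W}}$ by sieves in $\mot$, the measurability of $\mathcal{W}$ propagates to $\tilde{\mathcal{W}}$, and hence $[\tilde{\mathcal{W}}]\in\tau_{\sigma}(\mmot)$ is the desired preimage.

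The main obstacle I anticipate is verifying that $\mot$ itself is closed under such constant extensions; this is most likely part of the standing definition of a motivic site in \cite{me2}, but the argument must appeal to this closure explicitly, since otherwise the preimage $\tilde{\mathcal{W}}$ may fail to live in $\mot$ at all. Once $\Phi$ is established as surjective on objects, the second assertion follows at once: applying $\grot{-}$ yields a ring homomorphism $\grot{\Phi} : \grot{\tau_{\sigma}(\mmot)} \to \grot{\categ{Mes}\tau_{\sigma}(\mot)}$, and surjectivity transfers from $\Phi$ to $\grot{\Phi}$ because each generator $[\mathcal{W}]$ of the codomain lifts to $[\tilde{\mathcal{W}}]$ in the domain by the explicit construction above, while the defining relations of $\grot{\categ{Mes}\tau_{\sigma}(\mot)}$ are images under $\Phi$ of relations in $\grot{\tau_{\sigma}(\mmot)}$ by functoriality.
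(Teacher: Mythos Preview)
The paper's own proof consists of the single word ``immediate.'' Your proposal is therefore far more detailed than anything the author supplies, and your two-step outline (first check that $\tau_{\sigma}$ carries measurable objects to measurable objects, then exhibit an explicit preimage for surjectivity) is a reasonable way to unpack what the author presumably has in mind. You also correctly flag the one genuinely nontrivial point: the constant-extension construction $\tilde{\mathcal{W}}$ only works if the ambient motivic site is closed under such extensions, and this has to be imported from the standing hypotheses in \cite{me2}.

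One remark on your surjectivity step: since any $\mathcal{W}\in\categ{Mes}\tau_{\sigma}(\mot)$ already lies in $\tau_{\sigma}(\mot)$ by definition, it is \emph{a priori} of the form $\tau_{\sigma}(\sX{})$ for some $\sX{}\in\mot$, so you do not strictly need to build a new preimage from scratch; the question reduces to whether one can choose or modify $\sX{}$ so that it becomes measurable. Your constant-extension $\tilde{\mathcal{W}}$ is one such modification, but the author may well be regarding measurability as a degreewise condition that is tautologically inherited under $\tau_{\sigma}$ and its section, which would make the whole statement unwind directly from the definitions in \cite{me2}. Either way, your argument is sound modulo the closure property you identified, and it supplies the content the paper omits.
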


\begin{proof}
immediate.
\end{proof}

In general, the assignment which sends an admissible open $\mathcal{U}$ of $\sZ\ $ to $\sF{\mathcal{U}}$ is not a sheaf on $\sZ\ $. Thus, we will denote the sheafification of $\sF{-}$ by $^a\sF{-}$.

\begin{theorem}
Let $\sZ\ $ be a limit simplicial sieve and let $\sF{-}$ be the any of the functors of the form
$\sC{}{}{-,F(-)},  \bsC{}{}{-,F(-)}, I_{-}^{\sigma}\sC{}{}{-,F(-)}, I_{-}^{\sigma}\bsC{}{}{-,F(-)}$ where $F$ is any functor  from limit $n$-simplicial sieves to the category of motivic sites which are closed under infinite union. Then
$\grot{\mmot_{F(\sZ{})}|_{-}}$ is a flasque sheaf on $\sZ\ $. Also, the assignment which sends an admissible open $\mathcal{U}$ of $\sZ\ $ to $\sF{\mathcal{U}}$ is a presheaf on $\sZ\ $. 
\end{theorem}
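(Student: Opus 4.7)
The plan is to handle the two assertions separately, treating the flasque sheaf claim for $\grot{\mmot_{F(\sZ{})}|_{-}}$ first and then bootstrapping from it, together with the sheaf properties of $\sT{}{}{-}$ and $\bsT{}{}{-}$ established in Propositions \ref{persheaf} and its analogues, to obtain the presheaf structure on $\sF{-}$. In both parts the restriction morphisms for an inclusion $\mathcal{U}\subset\mathcal{V}$ of admissible opens will be defined on generators by intersecting supports (or graphs) with the smaller open: for a measurable subsieve $\mathcal{Y}$ of $F(\sZ{})|_{\mathcal{V}}$, the restriction is $[\mathcal{Y}\cap\mathcal{U}]$, exactly in the spirit of the scheme-theoretic case.

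For the flasque sheaf claim, I would verify the sheaf axiom on generators first. Given an admissible cover $\{\mathcal{U}_i\}$ of $\mathcal{U}$ and compatible classes $[\mathcal{Y}_i]\in\grot{\mmot_{F(\sZ{})}|_{\mathcal{U}_i}}$, the hypothesis that $F(\sZ{})$ is a motivic site closed under infinite union furnishes $\mathcal{Y}:=\bigcup_i\mathcal{Y}_i$ as a measurable subsieve of $F(\sZ{})|_{\mathcal{U}}$, and compatibility on overlaps gives $[\mathcal{Y}]|_{\mathcal{U}_i}=[\mathcal{Y}_i]$; the gluing then extends to arbitrary elements of the Grothendieck ring by multilinearity, once one checks that the scissor relations are themselves local. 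Flasqueness follows immediately: any measurable subsieve $\mathcal{Y}$ of $F(\sZ{})|_{\mathcal{U}}$ is automatically a measurable subsieve of $F(\sZ{})$ via $\mathcal{U}\hookrightarrow\sZ{}$, giving a canonical extension of every generator, and hence of every class, from $\mathcal{U}$ to $\sZ{}$.

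For the presheaf claim on $\sF{-}$, each of the four candidate functors is a tensor product with one factor a Grothendieck ring of measurable subsieves (possibly $\tau_\sigma$-truncated) and the other a ring of total or weakly integrable functions, fibered over $\sT{0}{}{-}$ or $\bsT{0}{}{-}$. Each factor admits restriction maps, either from the first part of the theorem (for the Grothendieck component) or from the sheaf results of Sections \ref{2}--\ref{6} (for the total function components), and compatibility with the restriction on the base subring $\sT{0}{}{-}\subset\grot{\mmot_{F(\sZ{})}|_{-}}$ is automatic. Functoriality of tensor products then yields a unique restriction morphism on $\sF{-}$, and associativity of composition on each factor transfers to the tensor product. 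The main obstacle will be the sheaf axiom for $\grot{\mmot_{F(\sZ{})}|_{-}}$: one must verify that compatible local data glue not merely as sieves but as \emph{classes}, i.e. that every global scissor relation descends from local ones, and this is exactly where the closure-under-infinite-union hypothesis is indispensable. Once that point is secured, the remainder is a formal consequence of tensor product functoriality.
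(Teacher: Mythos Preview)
Your proposal is correct and follows essentially the same approach as the paper: the paper declares the presheaf claim and flasqueness ``trivial'' and only writes out the sheaf axiom for $S(-):=\grot{\mmot_{F(\sZ{})}|_{-}}$, gluing compatible classes $[\mathcal{S}_i]$ to $[\bigcup_i\mathcal{S}_i]$ exactly as you do (using closure under infinite union) and then checking uniqueness via the scissor relation. Your write-up is in fact more detailed than the paper's, which omits any explicit discussion of flasqueness or of the tensor-product restriction maps you spell out.
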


\begin{proof}
We just need to prove that $S(-):=\grot{\mmot_{F(\sZ{})}|_{-}}$ is in fact a sheaf as the rest of the statements are trivial.
Therefore, let $\mathcal{U}$ be an admissible open of $\sZ\ $ and let $\mathcal{U}_i$ be an arbitrary cover of $\mathcal{U}$ by admissible opens. For each, $i$, let $s_i \in S(\mathcal{U}_i)$ be such that $s_i|_{\mathcal{U}_j} = s_j|_{\mathcal{U}_i}$.
If $s_i = [\mathcal{S}_i]$ for some $n$-simplicial sieve $\mathcal{S}_i \in F(\mathcal{U}_i)\subset F(\mathcal{U})$, then this translates to 
\begin{equation}
[\mathcal{S}_i\cap\mathcal{U}_j]=[\mathcal{S}_j\cap\mathcal{U}_i]
\end{equation}
in $S(\mathcal{U})$.
We define the gluing of the $\mathcal{S}_i$ to be the element $[\mathcal{S}]$ where $\mathcal{S}=\cup \mathcal{S}_i$. Such a sieve is clearly measurable. In this vain, proving unicity amounts to showing that if
\begin{equation}
 [\mathcal{S}\cap\mathcal{U}_i]=[\mathcal{T}\cap\mathcal{U}_i] \ ,
\end{equation}
then $[\mathcal{S}]=[\mathcal{T}]$ which is easily shown to be the case by using the scissor relation.
\end{proof}

\subsection{Analogues of the results in \S 6}
From now on let  $\sF{-}$ be the any of the presheaves of the form
$\sC{}{}{-,F(-)}$,  $\bsC{}{}{-,F(-)}$, $I_{-}^{\sigma}\sC{}{}{-,F(-)},$ or $I_{-}^{\sigma}\bsC{}{}{-,F(-)}$ where $F$ is a functor which sends an $n$-simplicial sieve to the category of motivic sites which are closed under arbitrary union and such that 
\begin{equation}
\sarc_{\maxim}\mathcal{S} \in \categ{Mes}F(\sarc_{\maxim}\sZ{} ), \ \ \ \mbox{for all } \mathcal{S} \in \categ{Mes}F(\sZ\ ) \ . 
\end{equation}
We have
\begin{equation}
\sF{\sZ\ } \cong \injlim \sF{\sZ\maxim} \ ,
\end{equation}
and  the simplicial arc operator $\sarc_{\maxim}$ defines a morphism of ringed sieves from $(\sZ\maxim , \sF{-})$ to
$(\sZ0, \sF{-})$ where $\sZ0$ is the base of $\sZ\ $ by tensoring  the action of $\sarc_{\maxim}$ on the grothendieck ring of the motivic site $\categ{Mes}F(\sZ0)$ and the action of  $\sarc_{\maxim}$ on the other factor as defined in \S 6.

 Thus, every element $f$ of $\sF{\sZ\ }$ is of the form 
\begin{equation}
f = \sarc_{\fat}g
\end{equation}
for some $g\in \sF{\sZ\maxim}$ and for some $\maxim \in \bX$ where $\fat =\injlim \bX$ is the point system defining $\sZ{}$. Thus, if $f$ is an element of $I_{\sY\ }^{\sigma}\sC{}{}{\sZ \ ,F(\sZ{})}$ or $I_{\sY\ }^{\sigma}\bsC{}{}{\sZ \ ,F(\sZ{})}$, we have
\begin{equation}
\mu_{\sY\ }^{\sigma}(f) = \sarc_{\fat}\mu_{\sY\ }^{\sigma}(g)
\end{equation}
for some $g$ in  $\sC{}{}{\sZ\maxim ,F(\sZ\maxim)}$ (or, respectively,  $\bsC{}{}{\sZ\maxim ,F(\sZ\maxim)}$) for some $\maxim \in \bX$.

\begin{remark}
An analogous theorem to Theorem 10.1.1 of \cite{CL} holds for both $I_{-}^{\sigma}\sC{}{}{-,F(-)}$ and $I_{-}^{\sigma}\bsC{}{}{-,F(-)}.$ We will take this subject up in \cite{me3}.
\end{remark}

\subsection{Spealization to measurability}

Here we  apply the notion of the category of families of sieves indexed by $\nat^n$  as per the comments in \S $7.3$ of \cite{me2}. We denote this category by  $\categ{i}^n\categ{Sieves}_{\fld}$,

Note that there are a large amount of measures coming from the grothendieck ring  of a motivic site. 
By choosing a family of limit sieves $\sZ{}$, we are fixing the point system $\bX$ and limit point $\fat$ for each element of a motivic site $\mot$ relative to $\sZ{}$. We fix an ultra-filter $\sim$ on $\bX$ and a non-negative real number $Q$. We denote by $\mu_Q$ the cooresponding measure defined by
\begin{equation}\label{defrelmes}
\mu_{\fat, \bX, Q}^{\sim}(\sX{\star}) :=  \ulim {\class{\sX \maxim}\lef^{-\lceil Q\cdot\dim {\sarc_{\maxim}{\sX \ }}\rceil}} \ .
\end{equation}
Restricting to the sub-motivic site $\mot_Q$ of elements of $\mot$ which are measurable with respect to $\sim$ and $Q$, then
$\mu_{Q}$ restricted to $\grot{\mot_Q}_{\lef_{\bullet}}$ factors through the composition
\begin{equation}
\grot{\mot_Q}_{\lef_{\bullet}}\to\grot{\ssieves {S}}_{\lef_{\bullet}} \to \prod_{\sim} \grot{\ssieves {S}}_{\lef_{\bullet}}
\end{equation}
 where the map on the far right is the diagonal homomorphism. We can realize this as a cach\'e function $I_{\sX{}}$ defined inductively by $\beta_{\maxim}:=\lceil Q\cdot\dim{\sarc_{\maxim}{\sX \ }}\rceil$ on $\sX{}(\maxim)\setminus \sX{}(\maxim')$ and $\beta_{\maxim} =  \beta_{\maxim'}$ on $\sX{}(\maxim')$ where $\maxim' \leq \maxim$ 
and
\begin{equation}
I_{\sX{}}(\maxim)=\mathbbm{1}_{\sX{\maxim}}\cdot\lef^{-\beta_{\maxim}} \ .
\end{equation}
for all $\maxim \in \bX$ and $0$ otherwise.
Note that $\beta_{\maxim} \in \sS{}{}{\sX{}}$. Clearly then, since $\sX{}\in\mot_{Q}$, we have that $I_{\sX{}}$ is $\sigma$-integrable for all $\sigma$. In this way, we can see that the notion of integrability in this paper specializes to the notion of measurability \cite{me2}. 

Now, we define
\begin{equation} \label{pair}
\int _{\sX{}}^{\sigma} d\mu_{Q}:=\int^{\sigma} \mathbbm{1}_{\sX{}} d\mu_{Q} := \mu_{\sX{\sigma}}(I_{\sX{}}) \ .
\end{equation}
For each $\sigma$, we can extend this linearly through elements of the pullback of the the ring of $\sigma$-integrable  total functions on $\sZ{}$ along the structure morphism. We may also extend linearly through elements of the grothendieck ring.
Thus, we have defined a pairing of $\nat^n$ and $\cap_{\sigma} I_{\sZ{\sigma}}\sC{}{}{\sZ{},\mot_Q}$ which sends $(\sigma, c)$ to the element 
\begin{equation}
\int^{\sigma}c \in \sT{}{}{\sX{\sigma}}
\end{equation}
 determined by Equation \ref{pair}. We call this element the geometric  integral of $c$ along the $\sigma$-simplex.
This is a natural extension of the idea of measure in \cite{me2}. It is important to note that we may apply the pushforward of the structure morphism $j$ to $\sT{}{}{\sX{\sigma}}$ and tensoring by $1$ gives us an element of $\sC{}{}{\sZ{\sigma}}$. It could be interesting to investigate the properties of this integral relative to the action of the measure $\mu_{\sZ{\sigma}}$ more closely. The important thing to note is that if $\mu_{\sZ{\sigma}}$ factors in this way, then we are actually computing something like a schemic geometric motivic integral in the vein of \cite{schmot2}, \cite{me1}, or \cite{me2}.

\subsection{Topological realization}

In \cite{me2}, we defined the notion of the topological motivic site. Likewise, geometric realization should turn a total function into functions from a CW-complex to $\bA$. This is possibly something to investigate further.

We also defined the notions of homotopy grothendieck rings and topological homotopy grothendieck rings. Thus, we may tensor the ring of total functions on $\sX{}$ by the homotopy grothendieck ring to obtain the notion of the ring of homotopy cach\'e functions $\sC{}{h}{\sX{}}$ which will come equipped with a surjective ring homomorphism
\begin{equation}
\sC{}{h}{\sX{}}\to \sC{}{}{\sX{}} \ .
\end{equation}


\begin{thebibliography}{1}

\bibitem{CL} R. Cluckers \& F. Loeser \textit{Constructible motivic functions and motivic integration}, Invent. math. 173, 23-121 (2008)

\bibitem{schmot1} H. Schoutens \textit{Schemic Grothendieck Rings
I}, \url{websupport1.citytech.cuny.edu/faculty/hschoutens/PDF/SchemicGrothendieckRingPart
I.pdf} (2011).

\bibitem{schmot2} H. Schoutens \textit{Schemic Grothendieck Rings
II}, \url{websupport1.citytech.cuny.edu/faculty/hschoutens/PDF/SchemicGrothendieckRingPart
II.pdf} (2011).

\bibitem{me3} A. Stout \textit{A primitive change of variables formula for schemic motivic integration}, to appear

\bibitem{me2} A. Stout \textit{Measurable motivic sites}, \url{http://arxiv.org/abs/1306.4056}

\bibitem{me1} A. Stout \textit{Stability theory for schemes of finite
type and schemic motivic integration}, \url{arxiv.org/abs/1212.1375}




\end{thebibliography}
\end{document}